\numberwithin{equation}{section}
\numberwithin{figure}{section}
\theoremstyle{plain}
\newtheorem{thm}{\protect\theoremname}[section]
\theoremstyle{definition}
\newtheorem{defn}[thm]{\protect\definitionname}
\theoremstyle{plain}
\newtheorem{cor}[thm]{\protect\corollaryname}
\theoremstyle{plain}
\newtheorem{lem}[thm]{\protect\lemmaname}
\theoremstyle{remark}
\newtheorem*{rem*}{\protect\remarkname}
\theoremstyle{plain}
\newtheorem{prop}[thm]{\protect\propositionname}
\providecommand{\corollaryname}{Corollary}
\providecommand{\definitionname}{Definition}
\providecommand{\lemmaname}{Lemma}
\providecommand{\propositionname}{Proposition}
\providecommand{\remarkname}{Remark}
\providecommand{\theoremname}{Theorem}
\begin{document}

\title{Dimension of self-conformal measures associated to an exponentially
separated analytic IFS on $\mathbb{R}$}

\author{\noindent Ariel Rapaport}

\subjclass[2000]{\noindent 28A80, 37C45.}

\keywords{self-conformal measure, dimension of measures, exponential separation,
analytic IFS}

\thanks{This research was supported by the Israel Science Foundation (grant
No. 619/22). The author is a Horev Fellow at the Technion -- Israel
Institute of Technology.}
\begin{abstract}
We extend Hochman's work on exponentially separated self-similar measures
on $\mathbb{R}$ to the real analytic setting. More precisely, let
$\Phi=\left\{ \varphi_{i}\right\} _{i\in\Lambda}$ be an iterated
function system on $I:=[0,1]$ consisting of real analytic contractions,
let $p=(p_{i})_{i\in\Lambda}$ be a positive probability vector, and
let $\mu$ be the associated self-conformal measure. Suppose that
the maps in $\Phi$ do not have a common fixed point, $0<\left|\varphi_{i}'(x)\right|<1$
for $i\in\Lambda$ and $x\in I$, and $\Phi$ is exponentially separated.
Under these assumptions, we prove that $\dim\mu=\min\left\{ 1,H(p)/\chi\right\} $,
where $H(p)$ is the entropy of $p$ and $\chi$ is the Lyapunov exponent.

The main novelty of our work lies in an argument that reduces convolutions
of $\mu$ with measures on the (infinite-dimensional) space of real
analytic maps to convolutions with measures on vector spaces of polynomials
of bounded degree. The reason for this reduction is that, for the
latter convolutions, we can establish an entropy increase result,
which plays a crucial role in the proof. We believe that our proof
strategy has the potential to extend other significant recent results
in the dimension theory of stationary fractal measures to the real analytic setting.
\end{abstract}

\maketitle

\section{Introduction and the main result}

\subsection{Background}

Set $I:=[0,1]$, let $\Lambda$ be a finite nonempty index set, and
let $\gamma>0$. For each $i\in\Lambda$, let $\varphi_{i}:I\rightarrow I$
be a $C^{1+\gamma}$-map satisfying $0<\left|\varphi_{i}'(x)\right|<1$
for all $x\in I$. Setting $\Phi:=\left\{ \varphi_{i}\right\} _{i\in\Lambda}$,
the collection $\Phi$ is called a conformal $C^{1+\gamma}$-smooth
iterated function system (IFS) on $I$. By \cite{Hut}, there exists
a unique nonempty compact set $K_{\Phi}\subset I$ such that $K_{\Phi}=\cup_{i\in\Lambda}\varphi_{i}\left(K_{\Phi}\right)$.
It is called the self-conformal set, or the attractor, corresponding
to $\Phi$.

The system $\Phi$ also gives rise to certain natural measures. Let $p=(p_{i})_{i\in\Lambda}$
be a positive probability vector. By \cite{Hut}, there exists a unique
Borel probability measure $\mu$ on $I$ such that $\mu=\sum_{i\in\Lambda}p_{i}\cdot\varphi_{i}\mu$,
where $\varphi_{i}\mu:=\mu\circ\varphi_{i}^{-1}$ is the pushforward of $\mu$ via $\varphi_{i}$.
The measure $\mu$, which is supported on $K_{\Phi}$, is called the
self-conformal measure corresponding to $\Phi$ and $p$.

By the work of Feng and Hu \cite{FH-dimension}, it follows that $\mu$
is exact dimensional. That is, there exists a number $\dim\mu$, called
the dimension of $\mu$, such that
\[
\underset{\delta\downarrow0}{\lim}\frac{\log\mu\left(B(x,\delta)\right)}{\log\delta}=\dim\mu\text{ for }\mu\text{-a.e. }x\in I.
\]
Here, $B(x,\delta)$ denotes the closed ball with centre $x$ and
radius $\delta$. Computing the dimension of self-conformal measures is a natural and
important problem in fractal geometry.

Write $H(p)$ for the entropy of $p$ and $\chi:=\chi\left(\Phi,p\right)$
for the Lyapunov exponent associated to $\Phi$ and $p$. That is,
\[
H(p):=-\sum_{i\in\Lambda}p_{i}\log p_{i}\:\text{ and }\:\chi:=-\sum_{i\in\Lambda}p_{i}\int\log\left|\varphi_{i}'(x)\right|\:d\mu(x),
\]
where we always use $2$ as the base of the $\log$. It is easy to
show that $H(p)/\chi$ is always an upper bound for $\dim\mu$.
Moreover, in the absence of certain obvious obstructions, it is generally expected that
\begin{equation}
\dim\mu=\min\left\{ 1,H(p)/\chi\right\} ,\label{eq:dim mu =00003D min=00007B1,H(p)/chi=00007D}
\end{equation}
although this is often difficult to verify.

The IFS $\Phi$ is said to satisfy the strong separation condition (SSC)
if the sets $\left\{ \varphi_{i}\left(K_{\Phi}\right)\right\} _{i\in\Lambda}$
are disjoint, in which case it is easy to establish (\ref{eq:dim mu =00003D min=00007B1,H(p)/chi=00007D}).
Additionally, under a suitable transversality condition, the equality (\ref{eq:dim mu =00003D min=00007B1,H(p)/chi=00007D})
can be shown to hold almost surely under a natural randomization of
the parameters (see \cite[Section 7]{MR1852098} and \cite[Chapter 14]{MR4661364}). It is desirable
to find explicit conditions for (\ref{eq:dim mu =00003D min=00007B1,H(p)/chi=00007D})
that are much milder than the restrictive SSC.

It is said that $\Phi$ is self-similar if $\varphi_{i}$ is a contracting
similarity for each $i\in\Lambda$. That is, there exist $r_{i},a_{i}\in\mathbb{R}$
such that $0<\left|r_{i}\right|<1$ and $\varphi_{i}(x)=r_{i}x+a_{i}$
for $x\in I$. In recent years, major progress has been made on the
above problem in the self-similar case, initiated by Hochman's seminal
work \cite{Ho1}. Given a word $i_{1}...i_{n}=u\in\Lambda^{n}$, set
$\varphi_{u}:=\varphi_{i_{1}}\circ...\circ\varphi_{i_{n}}$. Also,
let $\Vert\cdot\Vert_{I}$ be the supremum norm on $I$. That is,
$\Vert f\Vert_{I}=\sup_{x\in I}\left|f(x)\right|$ for $f\in C(I)$.
\begin{defn}
\label{def:exp sep}We say that $\Phi$ is exponentially separated
if there exists $c>0$ such that for infinitely many $n\in\mathbb{Z}_{>0}$,
\[
\Vert\varphi_{u_{1}}-\varphi_{u_{2}}\Vert_{I}\ge c^{n}\text{ for all distinct }u_{1},u_{2}\in\Lambda^{n}.
\]
\end{defn}

The main result of \cite{Ho1} states that (\ref{eq:dim mu =00003D min=00007B1,H(p)/chi=00007D})
holds whenever $\Phi$ is self-similar and exponentially separated.
It is a challenging and important open problem to relax these assumptions
as much as possible.

Regarding the exponential separation assumption, note that it is necessary
to assume some form of separation. Indeed, if $\Phi$ does not generate
a free semigroup, in which case it is said to have exact overlaps,
and $\dim\mu<1$, then (\ref{eq:dim mu =00003D min=00007B1,H(p)/chi=00007D})
necessarily fails. One of the most important open problems in fractal
geometry, called the exact overlaps conjecture, asserts that in the
self-similar setting, (\ref{eq:dim mu =00003D min=00007B1,H(p)/chi=00007D})
can only fail in the presence of exact overlaps (see \cite{MR3966837,Var_ICM}
for further discussion). The exact overlaps conjecture has recently
been verified in various situations (see \cite{FengFeng,Rap-EO,rapaport20203maps,Var-Bernoulli}).

Up until the present paper, the self-similarity assumption had not
been strictly relaxed. By the work of Hochman and Solomyak \cite{HS},
equality (\ref{eq:dim mu =00003D min=00007B1,H(p)/chi=00007D}) holds whenever
$\Phi$ is exponentially separated, its maps are restrictions of Möbius
transformations, and the semigroup generated by $\Phi$, considered
as a subsemigroup of $\mathrm{SL}(2,\mathbb{R})$, is strongly irreducible
and proximal\footnote{The main result of \cite{HS} is actually more general, as it does
not require $\mu$ to be compactly supported.}. Aside from this result, \cite{Ho1} had not been extended to other
families of IFSs on the real line. In particular, until the present
work, in all cases where (\ref{eq:dim mu =00003D min=00007B1,H(p)/chi=00007D})
was verified under the exponential separation assumption, the IFS
$\Phi$ was contained in a finite-dimensional Lie group.

\subsection{The main result}

In this paper, we make significant progress on extending \cite{Ho1}
beyond the self-similar setting, and even beyond the situation in
which the acting semigroup is contained in a finite-dimensional manifold.
The following theorem is our main result. Recall that a map $f:I\rightarrow\mathbb{R}$
is said to be real analytic if for each $x_{0}\in I$ there exist
$\epsilon>0$ and $a_{0},a_{1},...\in\mathbb{R}$ such that $f(x)=\sum_{k\ge0}a_{k}(x-x_{0})^{k}$
for all $x\in I\cap B(x_{0},\epsilon)$. When the maps in $\Phi$
are all real analytic, we call it an analytic IFS.
\begin{thm}
\label{thm:main thm}Let $\Phi:=\left\{ \varphi_{i}\right\} _{i\in\Lambda}$
be an analytic IFS on $I$ such that $0<\left|\varphi_{i}'(x)\right|<1$
for all $i\in\Lambda$ and $x\in I$. Suppose that the maps in $\Phi$
do not have a common fixed point, and that $\Phi$ is exponentially
separated. Let $p=(p_{i})_{i\in\Lambda}$ be a positive probability
vector, and let $\mu$ be the self-conformal measure associated to
$\Phi$ and $p$. Then, $\dim\mu=\min\left\{ 1,H(p)/\chi\left(\Phi,p\right)\right\} $.
\end{thm}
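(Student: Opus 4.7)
The plan is to follow the strategy of Hochman \cite{Ho1} and argue by contradiction, assuming $\alpha := \dim\mu < \min\{1, H(p)/\chi\}$. By the exact dimensionality of $\mu$ due to Feng--Hu, together with standard entropy estimates, this translates into the quantitative statement that the entropy of $\mu$ in the dyadic partition of scale $2^{-m}$ is approximately $\alpha m$ for all large $m$. The objective becomes to show that this entropy must in fact grow strictly faster, yielding the sought contradiction.

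The central novelty is the reduction hinted at in the abstract. For a large integer $n$, iterate the invariance $\mu = \sum_{u \in \Lambda^{n}} p_{u} \cdot \varphi_{u}\mu$. Since each $\varphi_{i}$ is analytic and uniformly contracting on $I$, a Cauchy estimate on a fixed complex neighbourhood of $I$ shows that the Taylor coefficients of $\varphi_{u}$ at any basepoint decay geometrically in their order, uniformly in $u \in \Lambda^{n}$. Truncating $\varphi_{u}$ to its Taylor polynomial $P_{u}$ of degree $d = d(n)$, with $d(n)$ growing linearly in $n$, therefore approximates $\varphi_{u}$ with error negligible at the relevant scales. After this truncation, $\mu$ is captured, up to controlled error, by a convolution-style expression $\theta_{n} \ast \mu := \sum_{u} p_{u}\cdot P_{u}\mu$, where $\theta_{n} := \sum_{u} p_{u}\delta_{P_{u}}$ is a probability measure on the finite-dimensional vector space $V_{d}$ of polynomials of degree at most $d$.

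The next step is to establish an entropy-increase theorem in this setting: if $\dim\mu < 1$ and $\theta$ is a probability measure on $V_{d}$ whose entropy at scale $2^{-m}$, in some fixed system of coordinates on $V_{d}$, exceeds a trivial threshold, then $\theta \ast \mu$ has entropy at that scale strictly greater than the entropy of $\mu$ by a definite gain. This is an adaptation of Hochman's inverse theorem to the action of the finite-dimensional vector space $V_{d}$ on measures on the line: evaluating polynomials at finitely many well-chosen basepoints $x_{1},\ldots,x_{N} \in I$ embeds $V_{d}$ into $\mathbb{R}^{N}$ in a quantitatively controlled way, making higher-dimensional entropy-increase techniques available. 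The exponential separation hypothesis, transported through the Taylor approximation, supplies the required non-atomicity of $\theta_{n}$ in $V_{d}$, and iterating the gain produces the contradiction.

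The principal obstacle is proving the polynomial entropy-increase theorem with constants that remain usable as $d \to \infty$, since $d$ grows with $n$ and hence with $m$. All quantitative inputs --- the inverse theorem, an appropriate notion of entropy porosity on $V_{d}$, and the scale matching --- must be tracked carefully in $d$. A second delicate calibration is the interplay between the combinatorial scale $2^{-n}$ associated with $\Phi$, the polynomial degree $d(n)$, and the resolution $2^{-m}$ at which entropy is measured, ensuring that exponential separation in $\Phi$ translates into the correct non-atomicity of $\theta_{n}$ at a scale where the entropy-increase theorem actually applies.
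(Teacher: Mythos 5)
Your proposal correctly identifies the general shape of the argument — contradiction via Feng--Hu exact dimensionality, reduction to entropy of dyadic partitions, Taylor truncation to land in a polynomial space, and a finite-dimensional entropy-increase theorem powered by Hochman's inverse theorem. However, it contains a genuine gap at precisely the point you flag as ``the principal obstacle,'' and the paper's resolution of that obstacle is the core novelty of the argument, so the gap is substantive rather than cosmetic.

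You propose truncating $\varphi_u$, $u\in\Lambda^n$, to a Taylor polynomial of degree $d(n)$ with $d(n)$ growing linearly in $n$, and then applying a polynomial entropy-increase theorem in $\mathcal{P}_{d(n)}$. The issue is that all known entropy-increase statements of this type, including the one in the paper (Theorem \ref{thm:ent inc with poly}), produce a gain $\rho=\rho(k,\epsilon)$ that degenerates as the ambient polynomial degree $k$ grows. You would therefore be trying to win a race between a gain $\rho(d(n),\epsilon)\to 0$ and a fixed deficiency, and there is no reason to believe the gain survives. You acknowledge this but do not supply a mechanism to control it. The paper avoids the problem entirely: $k$ is \emph{fixed}, depending only on the exponential-separation constant $c$, the deficiency $\Delta$, the bounded-distortion constant $C$, and an auxiliary parameter $M$, never on $n$. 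The key device you are missing is the two-scale decomposition $\Lambda^{n+n'}$ with $n'=\lfloor n\Delta/(2\log|\Lambda|)\rfloor$: one Taylor-expands $\varphi_v$ ($v\in\Lambda^n$) at the point $a=\varphi_u(0)$ for $u\in\Lambda^{n'}$ and evaluates only on $\varphi_u(I)$, an interval of diameter $\approx 2^{-\chi n'}$. By Lemma \ref{lem:bd distort}, the remainder is then bounded by roughly $C^{2k}2^{-(k+1)n'(\chi-\delta)}2^{-n(\chi-\delta)}$, and since $n'$ is a positive fraction of $n$, a fixed $k$ (large relative to $M$ and $\Delta^{-1}$) already drives this below $2^{-Mn-\chi(n+n')}$; see (\ref{eq:bd on remainder}). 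This is what makes the entropy-increase theorem applicable with uniformly bounded ambient dimension, and it is exactly what collapses your obstacle.

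Two further discrepancies are worth noting. First, the paper does not run the entropy-increase argument on the unconditioned measure $\mu$; it disintegrates $\beta$ over $\Pi^{-1}\mathcal{B}_{\mathbb{R}}$, works with the slices $\beta_\omega$, and uses the Feng--Hu formula $\dim\mu=(H(p)-\Delta')/\chi$ to convert $\dim\mu<H(p)/\chi$ into $\Delta'>0$, which in turn (via Lemma \ref{lem:0-ent lemma} and (\ref{eq:ent of slices})) forces the projected polynomial measure $\nu$ to have dyadic entropy at least $\Delta/(6M)$; your sketch bypasses this and appeals loosely to ``non-atomicity of $\theta_n$,'' which by itself gives no lower bound on normalized entropy at the right scale. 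Second, exponential separation is used not as a soft non-atomicity input but quantitatively, to guarantee that distinct $v_1,v_2\in\mathcal{U}_{n,j}$ land in distinct atoms of $\mathcal{D}_{Mn}^{\mathcal{P}_k}$ (see the argument around (\ref{eq:by exp sep}) and (\ref{eq:ub on dist of poly})), and this again only works because $k$ is fixed so that the approximation error $2^{-Mn-\chi(n+n')}$ is genuinely negligible next to $c^{n+n'}$ once $c^{-1}\ll M$. You would need to supply these missing pieces, and in particular the fixed-$k$ two-scale Taylor trick, for the proposal to go through.
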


Let us make some remarks on the conditions appearing in the theorem.
Firstly, the requirement that $0<\left|\varphi_{i}'(x)\right|<1$
for all $i\in\Lambda$ and $x\in I$ is standard in the context of
conformal IFSs. It ensures that the action is uniformly contracting
and of bounded distortion (see Lemma \ref{lem:bd distort}).

Secondly, the no common fixed point assumption is equivalent to $K_{\Phi}$
not being a singleton. This is clearly necessary for the validity
of (\ref{eq:dim mu =00003D min=00007B1,H(p)/chi=00007D}), unless
$\Phi$ consists of a single map. This assumption does not appear
in \cite{Ho1}, as exponential separation makes it redundant in the
self-similar setting.

Thirdly, as in the self-similar case, even though the exponential
separation condition is quite mild, it is desirable to relax it to
the necessary assumption of no exact overlaps. Given that the exact
overlaps conjecture is still completely open even in the self-similar
setting, at present this is well beyond our reach.

Finally, the analyticity assumption is crucial for our proof of Theorem
\ref{thm:main thm}. Indeed, in the proof, we assume by contradiction
that
\begin{equation}
\Delta:=\min\left\{ 1,H(p)/\chi\right\} -\dim\mu>0,\label{eq:def of Delta in intro}
\end{equation}
and consider $k$-th order Taylor polynomials at different points
in $K_{\Phi}$ of elements in the semigroup $\mathrm{S}_{\Phi}:=\left\{ \varphi_{u}\right\} _{u\in\Lambda^{*}}$
generated by $\Phi$. The smaller the values of $\Delta$ and $c$,
where $c>0$ is the exponential separation constant from Definition
\ref{def:exp sep}, the larger we need to take $k$. Furthermore,
using analyticity and bounded distortion, we obtain upper bounds on
the higher derivatives of members of $\mathrm{S}_{\Phi}$ in terms
of their first derivative (see Lemma \ref{lem:bd distort}), which
are essential for the proof. Consequently, we cannot relax the analyticity
condition to merely assume that the maps in $\Phi$ are of class $C^{\infty}$.

One of the main advantages of \cite{Ho1} is that it provides many
new explicit examples in which (\ref{eq:dim mu =00003D min=00007B1,H(p)/chi=00007D})
holds. Indeed, in the self-similar setting, when $\Phi$ is defined
by algebraic parameters, it has no exact overlaps if and only if it
is exponentially separated. Unfortunately, we are unable to prove
such a statement in the analytic case at present. Even when $\Phi$
consists of polynomials with algebraic coefficients, it is unclear to us
whether $\Phi$ is exponentially separated whenever it generates a
free semigroup. The problem lies in the fact that, when $\Phi$ consists
of polynomials, the maximal degree of a polynomial in $\{\varphi_{u}\}_{u\in\Lambda^{n}}$
increases exponentially with $n$, unless $\Phi$ is self-similar.

At present, the main benefit of Theorem \ref{thm:main thm} is that
it imposes strong restrictions on the system $\Phi$ whenever (\ref{eq:dim mu =00003D min=00007B1,H(p)/chi=00007D})
fails. Furthermore, as described in Section \ref{subsec:one-param families} below, we can use Theorem \ref{thm:main thm}, in combination with a result of Solomyak and Takahashi \cite{MR4300232}, to show that, under mild assumptions, given a one-parameter family of analytic IFSs, the set of parameters for which (\ref{eq:dim mu =00003D min=00007B1,H(p)/chi=00007D})
fails has zero Hausdorff dimension. Finally, we believe that, in the future, our proof strategy could be adapted to extend other significant recent results in the dimension theory of stationary fractal measures to the real analytic setting.

\subsection{Dimension of self-conformal sets}

Using Theorem \ref{thm:main thm}, we are also able to compute the
Hausdorff dimension of $K_{\Phi}$ in the analytic exponentially separated
case. 

For each $t\ge0$ set
\[
P_{\Phi}(t):=\underset{n\rightarrow\infty}{\lim}\frac{1}{n}\log\left(\sum_{u\in\Lambda^{n}}\Vert\varphi_{u}'\Vert_{I}^{t}\right),
\]
where the limit exists by sub-additivity. It is easy to verify that
$P_{\Phi}(0)=\log|\Lambda|$, $P_{\Phi}(t)\rightarrow-\infty$ as
$t\rightarrow\infty$, and $P_{\Phi}$ is strictly decreasing and
continuous. Thus, there exists a unique nonnegative real number $s(\Phi)$ such that $P_{\Phi}\left(s(\Phi)\right)=0$.
Following \cite{MR4661364}, we call $s(\Phi)$ the conformal
similarity dimension associated to $\Phi$.

It is not difficult to see that $\min\left\{ 1,s(\Phi)\right\} $
is always an upper bound for $\dim_{H}K_{\Phi}$, and that $\dim_{H}K_{\Phi}=s(\Phi)$
whenever $\Phi$ satisfies the SSC. Moreover, under a suitable transversality condition, the equality $\dim_{H}K_{\Phi}=\min\left\{ 1,s(\Phi)\right\} $ holds
almost surely under a natural randomization of the parameters. For the proof of these facts and for further
discussion, we refer the reader to \cite[Chapter 14]{MR4661364}.
\begin{cor}
\label{cor:main cor for sets}
Let $\Phi:=\left\{ \varphi_{i}\right\} _{i\in\Lambda}$ be an analytic
IFS on $I$ such that $0<\left|\varphi_{i}'(x)\right|<1$ for all
$i\in\Lambda$ and $x\in I$. Suppose that the maps in $\Phi$ do
not have a common fixed point, and that $\Phi$ is exponentially separated.
Then $\dim_{H}K_{\Phi}=\min\left\{ 1,s(\Phi)\right\} $.
\end{cor}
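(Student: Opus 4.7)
The plan is to derive the corollary from Theorem \ref{thm:main thm} by applying the main theorem to the iterated system $\Phi^{(n)}:=\{\varphi_u\}_{u\in\Lambda^n}$ with probability weights modeled on $\|\varphi_u'\|_I^{s(\Phi)}$, and then letting $n\to\infty$. The upper bound $\dim_H K_\Phi\leq\min\{1,s(\Phi)\}$ is classical: a covering argument using the cylinders $\{\varphi_u(I)\}_{u\in\Lambda^n}$ together with the identity $P_\Phi(s(\Phi))=0$ gives $\dim_H K_\Phi\leq s(\Phi)$, and $\dim_H K_\Phi\leq 1$ is immediate (see \cite[Chapter 13]{MR4661364}).

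For the lower bound, fix $n\geq 1$ and verify that $\Phi^{(n)}$ satisfies the hypotheses of Theorem \ref{thm:main thm}. Analyticity and $|\varphi_u'(x)|\in(0,1)$ follow from the chain rule; absence of a common fixed point follows because any common fixed point of $\{\varphi_{i^n}\}_{i\in\Lambda}$ is the unique attracting fixed point of each $\varphi_i$, which would force $\Phi$ itself to have a common fixed point. For the inheritance of exponential separation, note that $\beta(k):=\min_{u_1\neq u_2\in\Lambda^k}\|\varphi_{u_1}-\varphi_{u_2}\|_I$ is non-increasing in $k$ (obtained by appending a common letter and using $\|\varphi_u\circ\varphi_i-\varphi_v\circ\varphi_i\|_I\leq\|\varphi_u-\varphi_v\|_I$). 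If $\beta(k_j)\geq c^{k_j}$ along a sequence $k_j\to\infty$, then setting $m_j:=\lfloor k_j/n\rfloor$ and fixing any $c'<c^n$ gives $\beta(nm_j)\geq\beta(k_j)\geq c^{k_j}\geq(c')^{m_j}$ for all sufficiently large $j$, so $\Phi^{(n)}$ is exponentially separated with constant $c'$.

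Set $s:=s(\Phi)$, $Z_n:=\sum_{u\in\Lambda^n}\|\varphi_u'\|_I^s$, $q^{(n)}_u:=\|\varphi_u'\|_I^s/Z_n$, and let $\mu^{(n)}$ denote the self-conformal measure of $(\Phi^{(n)},q^{(n)})$. Since $\mu^{(n)}$ is supported on $K_{\Phi^{(n)}}=K_\Phi$, Theorem \ref{thm:main thm} yields
\[
\dim_H K_\Phi\geq\dim\mu^{(n)}=\min\left\{1,\frac{H(q^{(n)})}{\chi(\Phi^{(n)},q^{(n)})}\right\}.
\]
A direct computation gives $H(q^{(n)})=-s\sum_u q^{(n)}_u\log\|\varphi_u'\|_I+\log Z_n$, while bounded distortion (Lemma \ref{lem:bd distort}) yields $\chi(\Phi^{(n)},q^{(n)})=-\sum_u q^{(n)}_u\log\|\varphi_u'\|_I+O(1)$ with an $O(1)$ term uniform in $n$. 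Uniform contractivity forces the common main term $-\sum_u q^{(n)}_u\log\|\varphi_u'\|_I$ to be of order $n$, while $P_\Phi(s)=0$ gives $\log Z_n=o(n)$. Dividing and letting $n\to\infty$ yields $H(q^{(n)})/\chi(\Phi^{(n)},q^{(n)})\to s$, so the right-hand side above tends to $\min\{1,s(\Phi)\}$. The one genuinely delicate step is the inheritance of exponential separation by $\Phi^{(n)}$, handled above via monotonicity of $\beta$; the rest is a routine estimate using bounded distortion and the definition of $s(\Phi)$.
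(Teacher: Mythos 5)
Your proposal is correct, and it takes a genuinely different route from the paper's proof. Both arguments reduce the lower bound to applying Theorem \ref{thm:main thm} to the iterated system $\Phi_n=\{\varphi_u\}_{u\in\Lambda^n}$ and sending $n\to\infty$, but they differ in the choice of probability weights and the machinery invoked. The paper appeals to thermodynamic formalism (\cite[Theorem 1.22]{MR2423393}) to produce a $\sigma$-invariant equilibrium measure $\nu$ with $s(\Phi)=h(\nu)/\chi(\Phi,\nu)$, then uses its cylinder marginals $q_n=(\nu([u]))_{u\in\Lambda^n}$ and cites the Shannon--McMillan--Breiman and ergodic-theorem convergences $\frac{1}{n}H(q_n)\to h(\nu)$, $\frac{1}{n}\chi(\Phi_n,q_n)\to\chi(\Phi,\nu)$. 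You instead take the explicit weights $q^{(n)}_u=\|\varphi_u'\|_I^s/Z_n$ and verify $H(q^{(n)})/\chi(\Phi_n,q^{(n)})\to s(\Phi)$ by a direct computation using bounded distortion and $P_\Phi(s)=0$. Your approach is more elementary and self-contained (it avoids the equilibrium-state black box and the attendant identification of $s(\Phi)$ with $h(\nu)/\chi(\Phi,\nu)$, at the cost of a short explicit estimate), and it also spells out the inheritance of the hypotheses of Theorem \ref{thm:main thm}---particularly the exponential separation of $\Phi_n$ via monotonicity of $\beta(k)=\min_{u_1\neq u_2\in\Lambda^k}\|\varphi_{u_1}-\varphi_{u_2}\|_I$, and the absence of a common fixed point for $\Phi_n$ via the fixed points of $\varphi_i^{\circ n}$---which the paper leaves as routine. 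Both proofs are valid; the paper's is shorter because it leans on a known structural result, while yours is more hands-on and transparent.
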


\begin{proof}
Let $\sigma:\Lambda^{\mathbb{N}}\rightarrow\Lambda^{\mathbb{N}}$
be the left-shift map. Using \cite[Theorem 1.22]{MR2423393}, it can
be shown that there exists a unique $\sigma$-invariant Borel probability
measure $\nu$ on $\Lambda^{\mathbb{N}}$ such that $s(\Phi)=h(\nu)/\chi(\Phi,\nu)$.
Here $h(\nu)$ is the entropy of $\nu$, and $\chi(\Phi,\nu)$ is
the Lyapunov exponent associated to $\Phi$ and $\nu$ (see \cite[Chapter 14]{MR4661364}).
Moreover, by the construction of $\nu$ (again see \cite{MR2423393}),
we have $\nu\left([u]\right)>0$ for each $u\in\Lambda^{*}$, where $[u]$
is the cylinder set determined by $u$ (see Section \ref{subsec:Symbolic-notations}
below).

Given $n\ge1$, set $\Phi_{n}:=\left\{ \varphi_{u}\right\} _{u\in\Lambda^{n}}$
and $q_{n}:=\left(\nu\left([u]\right)\right)_{u\in\Lambda^{n}}$.
It is easy to verify that $\frac{1}{n}H(q_{n})\overset{n}{\rightarrow}h(\nu)$
and $\frac{1}{n}\chi(\Phi_{n},q_{n})\overset{n}{\rightarrow}\chi(\Phi,\nu)$,
and so
\begin{equation}
\underset{n\rightarrow\infty}{\lim}\left(H(q_{n})/\chi(\Phi_{n},q_{n})\right)=s(\Phi).\label{eq:conv of ratio to s(Phi)}
\end{equation}
For $n\ge1$, let $\mu_{n}$ be the self-conformal measure corresponding
to $\Phi_{n}$ and $q_{n}$. Since $\Phi$ is exponentially separated,
it is easy to see that this is also the case for $\Phi_{n}$. Hence,
by Theorem \ref{thm:main thm} and since $\mu_{n}$ is supported on
$K_{\Phi}$,
\[
\min\left\{ 1,H(q_{n})/\chi(\Phi_{n},q_{n})\right\} =\dim\mu_{n}\le\dim_{H}K_{\Phi}.
\]
This together with (\ref{eq:conv of ratio to s(Phi)}) completes the
proof of the corollary.
\end{proof}

\subsection{\label{subsec:one-param families}One-parameter families of analytic IFSs}

As mentioned above, our results can be combined with a result of Solomyak and Takahashi \cite{MR4300232} to deduce a statement about one-parameter families of analytic IFSs.

For each $i\in\Lambda$, let $\{\varphi_{i,t}\}_{t\in I}$ be a one-parameter family of maps from $I$ into the interval $(0,1)$. Suppose that the map $(t, x) \mapsto \varphi_{i,t}(x)$ is real analytic on $I^2$ and satisfies $0<\left|\frac{d}{dx}\varphi_{i,t}(x)\right|<1$ for all $(t,x)\in I^{2}$.
For $t\in I$, set $\Phi_{t}:=\left\{ \varphi_{i,t}\right\} _{i\in\Lambda}$ and let $K_{\Phi_{t}}$ be the self-conformal set corresponding to $\Phi_{t}$. Given a probability vector $p=(p_{i})_{i\in\Lambda}$,
write $\mu_{t,p}$ for the self-conformal measure associated to $\Phi_{t}$
and $p$.

Given $\omega, \eta \in \Lambda^{\mathbb{N}}$, let $F_{\omega,\eta}:I \to \mathbb{R}$ be defined by  
\[
F_{\omega,\eta}(t):=\underset{n\rightarrow\infty}{\lim}\left(\varphi_{\omega_{0},t}\circ...\circ\varphi_{\omega_{n},t}(0)-\varphi_{\eta_{0},t}\circ...\circ\varphi_{\eta_{n},t}(0)\right)\text{ for }t\in I.
\]  
We say that the family of IFSs $\left\{ \Phi_{t}\right\} _{t\in I}$ is non-degenerate if, for all distinct $\omega, \eta \in \Lambda^{\mathbb{N}}$, the function $F_{\omega,\eta}$ is not identically zero.

\begin{cor}
\label{cor:param families}Suppose that $|\Lambda|\ge2$ and that
$\left\{ \Phi_{t}\right\} _{t\in I}$ is non-degenerate. Then there
exists $E\subset I$ with $\dim_{H}E=0$ such that for all $t\in I\setminus E$
we have $\dim_{H}K_{\Phi_{t}}=\min\left\{ 1,s(\Phi_{t})\right\} $
and $\dim\mu_{t,p}=\min\left\{ 1,H(p)/\chi\left(\Phi_{t},p\right)\right\} $
for every positive probability vector $p=(p_{i})_{i\in\Lambda}$.
\end{cor}

\begin{rem*}
In the self-similar case, Hochman \cite{Ho1} obtained a stronger statement in which the exceptional set $E$ is of zero packing dimension.
\end{rem*}

\begin{proof}
Let $E_{1}$ be the set of $t\in I$ for which $\Phi_{t}$ is not
exponentially separated, and let $E_{2}$ be the set of $t\in I$
for which the maps in $\Phi_{t}$ have a common fixed point.
By Theorem \ref{thm:main thm} and Corollary \ref{cor:main cor for sets},
in order to prove the corollary it suffices to show that $\dim_{H}E_{1}=\dim_{H}E_{2}=0$.
Since $\left\{ \Phi_{t}\right\} _{t\in I}$ is non-degenerate, it follows by \cite[Theorem 2.10]{MR4300232} that $\dim_{H}E_{1}=0$.
We next complete the proof by showing that $E_{2}$ is finite.

Let $i_{1},i_{2}\in\Lambda$ be distinct, for $j=1,2$ let $i_{j}^{\infty}$
denote the infinite word in $\Lambda^{\mathbb{N}}$ consisting only
of the letter $i_{j}$, and set $F:=F_{i_{1}^{\infty},i_{2}^{\infty}}$.
Given $t\in I$, note that $\varphi_{i_{1},t}$ and $\varphi_{i_{2},t}$
share a common fixed point if and only if $F(t)=0$. Consequently,
we have $E_{2}\subset F^{-1}\{0\}$. Since $\left\{ \Phi_{t}\right\} _{t\in I}$
is non-degenerate, $F$ is not identically
zero. Moreover, by \cite[Lemma 2.6]{MR4300232}, $F$ is real analytic
on $I$. Together, these two facts imply that $F^{-1}\{0\}$ is finite,
completing the proof of the corollary.
\end{proof}
It is easy to construct explicit families of IFSs for which Corollary
\ref{cor:param families} applies. In particular, as noted in
\cite[Corollary 2.11]{MR4300232}, if there exists $t_{0}\in I$ such
that $\Phi_{t_{0}}$ satisfies the SSC, then $F_{\omega,\eta}(t_{0})\ne0$
for all distinct $\omega,\eta\in\Lambda^{\mathbb{N}}$. Thus, in this case, $\left\{ \Phi_{t}\right\} _{t\in I}$ is non-degenerate, and Corollary \ref{cor:param families} applies.

\subsection{\label{subsec:About-the-proof}About the proof and structure of the
paper}

The main difficulty in proving Theorem \ref{thm:main thm} lies in
the fact that $\mathcal{A}(I)$, the vector space of real analytic
maps from $I$ to $\mathbb{R}$, is infinite-dimensional. This creates
significant obstacles when attempting to adapt the argument in \cite{Ho1}
to the present setting.

To address these obstacles, in a certain sense we reduce the problem
to a finite-dimensional situation. This is achieved by using Taylor's
theorem to approximate convolutions of the form $\nu.\mu$, where
$\mu$ is self-conformal and $\nu$ is a probability measure on $\mathcal{A}(I)$,
with convolutions of the form $\nu'.\mu$, where $\nu'$ is a probability
measure on the vector space $\mathcal{P}_{k}$ of polynomials in $\mathbb{R}[X]$
of degree at most $k$. Here, $k$ is an integer assumed to be large
with respect to $c$ from Definition \ref{def:exp sep} and $\Delta$
given in (\ref{eq:def of Delta in intro}).

The advantage of working with $\nu'.\mu$ in place of $\nu.\mu$ is
that, for the former convolutions, we can use Hochman's inverse theorem
from \cite{Ho1} to establish an entropy increase result. Roughly
speaking, it states that if $\nu'$ has nonnegligible entropy and
$\dim\mu<1$, then $\nu'.\mu$ has significantly more entropy than
$\mu$. Using this result, along with the above approximation of $\nu.\mu$
by $\nu'.\mu$, we derive the desired contradiction whenever $\Delta>0$,
which proves the theorem.

The proof of the entropy increase result follows the lines of analogous
statements for convolutions of self-similar and self-affine measures in higher dimensions with measures on spaces of affine maps (see \cite{BHR,Ho}). The main novelty of the present work lies in
the above reduction to a situation where the acting space of maps is finite-dimensional.

The rest of this paper is organized as follows. In Section \ref{sec:Preliminaries},
we introduce necessary notations and background. In Section \ref{sec:An-entropy-increase-result},
we establish the aforementioned entropy increase result. In Section
\ref{sec:Proof-of-the-main-result}, we carry out the proof of Theorem
\ref{thm:main thm}.

\subsubsection*{\textbf{\emph{Acknowledgment}}}

I would like to thank Boris Solomyak for suggesting the application for parametric families of IFSs discussed in Section \ref{subsec:one-param families}. I would also like to thank Zhou Feng and Haojie Ren for helpful remarks on an earlier version of this paper.

\section{\label{sec:Preliminaries}Preliminaries}

\subsection{\label{subsec:Basic-notations}Basic notations}

In what follows, the base of the logarithm is always $2$.

For a metric space $X$, denote by $\mathcal{M}(X)$ the collection
of all compactly supported Borel probability measures on $X$. Given
another metric space $Y$, a Borel map $f:X\rightarrow Y$, and a
measure $\nu\in\mathcal{M}(X)$, we write $f\nu:=\nu\circ f^{-1}$
for the pushforward of $\nu$ via $f$. For a Borel set $E\subset X$
with $\nu(E)>0$, we denote by $\nu_{E}$ the conditioning of $\nu$
on $E$. That is, $\nu_{E}:=\frac{1}{\nu(E)}\nu|_{E}$, where $\nu|_{E}$
is the restriction of $\nu$ to $E$.

Given a partition $\mathcal{D}$ of a set $X$, for $x\in X$ we denote
by $\mathcal{D}(x)$ the unique $D\in\mathcal{D}$ containing $x$.

For a real vector space $V$, a scalar $c\in\mathbb{R}$, and a vector
$w\in V$, define $S_{c}:V\rightarrow V$ and $T_{w}:V\rightarrow V$
by $S_{c}v=cv$ and $T_{w}v=w+v$ for $v\in V$.

Given an integer $n\ge1$, set $\mathcal{N}_{n}:=\left\{ 1,...,n\right\} $
and denote the normalized counting measure on $\mathcal{N}_{n}$ by
$\lambda_{n}$. That is, $\lambda_{n}\{i\}=1/n$ for each $ i \in \mathcal{N}_{n}$.

\subsection{Relations between parameters}

Given $R_{1},R_{2}\in\mathbb{R}$ with $R_{1},R_{2}\ge1$, we write
$R_{1}\ll R_{2}$ in order to indicate that $R_{2}$ is large with
respect to $R_{1}$. Formally, this means that $R_{2}\ge f(R_{1})$,
where $f$ is an unspecified function from $[1,\infty)$ into itself.
The values attained by $f$ are assumed to be sufficiently large in
a manner depending on the specific context.

Similarly, given $0<\epsilon_{1},\epsilon_{2}<1$ we write $R_{1}\ll\epsilon_{1}^{-1}$,
$\epsilon_{2}^{-1}\ll R_{2}$ and $\epsilon_{1}^{-1}\ll\epsilon_{2}^{-1}$
in order to respectively indicate that $\epsilon_{1}$ is small with
respect to $R_{1}$, $R_{2}$ is large with respect to $\epsilon_{2}$,
and $\epsilon_{2}$ is small with respect to $\epsilon_{1}$.

The relation $\ll$ is clearly transitive. That is, if $R_{1}\ll R_{2}$
and for $R_{3}\ge1$ we have $R_{2}\ll R_{3}$, then also $R_{1}\ll R_{3}$.
For instance, the sentence `Let $m\ge1$, $k\ge K(m)\ge1$ and $n\ge N(m,k)\ge1$
be given' is equivalent to `Let $m,k,n\ge1$ be with $m\ll k\ll n$'.

\subsection{\label{subsec:The-setup}The setup}

Set $I:=[0,1]$, let $\Lambda$ be a finite nonempty index set, and
for each $i\in\Lambda$ let $\varphi_{i}:I\rightarrow I$ be real
analytic. Suppose moreover that
\begin{equation}
0<\left|\varphi_{i}'(x)\right|<1\text{ for }i\in\Lambda\text{ and }x\in I,\label{eq:der bd from 0,1 asump}
\end{equation}
and set $\Phi:=\left\{ \varphi_{i}\right\} _{i\in\Lambda}$. Let $p=(p_{i})_{i\in\Lambda}$
be a positive probability vector, and let $\mu$ be the self-conformal
measure associated to $\Phi$ and $p$. That is, $\mu$ is the unique
element in $\mathcal{M}(I)$ satisfying the relation
\begin{equation}
\mu=\sum_{i\in\Lambda}p_{i}\cdot\varphi_{i}\mu.\label{eq:def rel of mu}
\end{equation}

As before, denote by $\chi$ the Lyapunov exponent associated to $\Phi$ and
$p$, which is defined by
\[
\chi:=-\sum_{i\in\Lambda}p_{i}\int\log\left|\varphi_{i}'(x)\right|\:d\mu(x).
\]

In what follows, we always assume that the maps in $\Phi$ do not
have a common fixed point. It is easy to see that this implies that
$\mu$ is nonatomic. That is, $\mu\{x\}=0$ for all $x\in I$.

\subsection{Bounded distortion}

The following lemma plays an important role in our arguments. Let
$\Lambda^{*}$ be the set of finite words over $\Lambda$. Recall
that for $i_{1}...i_{n}=u\in\Lambda^{*}$ we write $\varphi_{u}$
in place of the composition $\varphi_{i_{1}}\circ...\circ\varphi_{i_{n}}$.
\begin{lem}
\label{lem:bd distort}There exists a constant $C>1$ such that,
\begin{enumerate}
\item \label{enu:bd prop}$\left|\varphi_{u}'(x)\right|\le C\left|\varphi_{u}'(y)\right|$
for all $u\in\Lambda^{*}$ and $x,y\in I$;
\item $\left|\varphi_{u}^{(k)}(x)\right|\le k!C^{k}\left|\varphi_{u}'(0)\right|$
for all $u\in\Lambda^{*}$, $k\in\mathbb{Z}_{>0}$ and $x\in I$,
where $\varphi_{u}^{(k)}$ denotes the $k$th derivative of $\varphi_{u}$.
\end{enumerate}
\end{lem}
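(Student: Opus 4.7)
The plan is to prove (1) by the standard telescoping bounded-distortion argument, and then deduce (2) by extending the IFS holomorphically to a complex neighborhood of $I$ and applying Cauchy's integral formula for higher derivatives.

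For (1), I would fix $u=i_{1}\cdots i_{n}\in\Lambda^{*}$, use the chain rule to write
\[
\varphi_{u}'(x)=\prod_{j=1}^{n}\varphi_{i_{j}}'\bigl(\varphi_{i_{j+1}\cdots i_{n}}(x)\bigr),
\]
and estimate $\log|\varphi_{u}'(x)|-\log|\varphi_{u}'(y)|$ as a telescoping sum of $n$ terms. Each term is at most $L\cdot|\varphi_{i_{j+1}\cdots i_{n}}(x)-\varphi_{i_{j+1}\cdots i_{n}}(y)|$, where $L$ is a Lipschitz constant for $\log|\varphi_{i}'|$ on $I$; this constant is finite because, by (\ref{eq:der bd from 0,1 asump}) and compactness of $I$, each real analytic function $\varphi_{i}'$ is bounded away from $0$ on $I$. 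Since $\sup_{i,x}|\varphi_{i}'(x)|\le\rho$ for some $\rho\in(0,1)$, the $j$-th displacement is at most $\rho^{n-j}|x-y|$, so summing the resulting geometric series produces a bound on $\log|\varphi_{u}'(x)/\varphi_{u}'(y)|$ independent of $u$, $x$, and $y$, which is (1).

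For (2), I would extend each $\varphi_{i}$ to a holomorphic map on an open neighborhood of $I$ in $\mathbb{C}$. Using the compactness of $I$ together with (\ref{eq:der bd from 0,1 asump}), the key preliminary step is to choose a single bounded open set $U\subset\mathbb{C}$ with $I\subset U$ such that, for every $i\in\Lambda$, $\varphi_{i}$ extends holomorphically to $U$, $\varphi_{i}(U)\subset U$, and $c_{0}\le|\varphi_{i}'(z)|\le\rho'<1$ on $U$. The set $U:=\{z\in\mathbb{C}:\mathrm{dist}(z,I)<\delta\}$ works for $\delta>0$ sufficiently small: any $z\in U$ satisfies $|z-x_{0}|<\delta$ for some $x_{0}\in I$, and integrating $\varphi_{i}'$ along the segment from $x_{0}$ to $z$ yields $\mathrm{dist}(\varphi_{i}(z),I)\le\rho'\delta<\delta$. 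The telescoping argument of the previous paragraph now goes through verbatim on $U$ (using that $|\varphi_{i}'|$ is Lipschitz and bounded away from $0$ there), producing a constant $C_{1}>1$ with $|\varphi_{u}'(z)|\le C_{1}|\varphi_{u}'(0)|$ for all $u\in\Lambda^{*}$ and $z\in U$.

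Finally, I would fix $r>0$ so that $\{z\in\mathbb{C}:|z-x|\le r\}\subset U$ for every $x\in I$, and apply Cauchy's integral formula to the holomorphic function $\varphi_{u}'$ on the circle $|z-x|=r$:
\[
|\varphi_{u}^{(k)}(x)|=\left|\frac{(k-1)!}{2\pi i}\oint_{|z-x|=r}\frac{\varphi_{u}'(z)}{(z-x)^{k}}\,dz\right|\le\frac{(k-1)!}{r^{k-1}}\sup_{|z-x|=r}|\varphi_{u}'(z)|\le\frac{(k-1)!\,C_{1}}{r^{k-1}}|\varphi_{u}'(0)|.
\]
Taking $C:=\max(C_{1},1/r)+1$, a direct check gives $(k-1)!\,C_{1}/r^{k-1}\le k!\,C^{k}$ for every $k\ge1$, which is (2). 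The only non-routine step is the construction of the forward-invariant complex neighborhood $U$; once $U$ is in hand, the rest is a routine bounded-distortion-plus-Cauchy argument.
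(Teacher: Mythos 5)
Your argument is correct and follows essentially the same route as the paper: extend the IFS holomorphically to a forward-invariant complex neighbourhood of $I$, use bounded distortion there to control $\sup|\varphi_u'|$ in terms of $|\varphi_u'(0)|$, and then apply Cauchy's estimate. The only cosmetic difference is that you apply Cauchy's integral formula directly to $\varphi_u'$, whereas the paper first bounds $f:=\varphi_u-\varphi_u(0)$ by integrating $\varphi_u'$ and then applies Cauchy's estimate to $f$; both give a bound of the form $k!\,C^{k}|\varphi_u'(0)|$.
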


\begin{rem*}
Part (\ref{enu:bd prop}) of the lemma is often referred to as the
bounded distortion property.
\end{rem*}
\begin{proof}
The proof of the bounded distortion property is standard, and can
be found for instance in \cite[Chapter 14]{MR4661364}. Let us prove
the second part of the lemma.

For $\epsilon>0$, write $I^{(\epsilon)}$ for the open $\epsilon$-neighbourhood
of $I$ in $\mathbb{C}$. That is,
\[
I^{(\epsilon)}:=\left\{ z\in\mathbb{C}\::\:|z-x|<\epsilon\text{ for some }x\in I\right\} .
\]
By our assumptions on $\Phi$, there exists $0<\epsilon<1$ such that,
for each $i\in\Lambda$, the map $\varphi_{i}:I\rightarrow I$ extends
to a holomorphic map from $I^{(\epsilon)}$ to $\mathbb{C}$, satisfying
\begin{equation}
\epsilon\le\left|\varphi_{i}'(z)\right|\le1-\epsilon\:\text{ and }\:\left|\varphi_{i}''(z)\right|\le\epsilon^{-1}\:\text{ for }z\in I^{(\epsilon)}.\label{eq:prop nec for bd dist}
\end{equation}
Given $z\in I^{(\epsilon)}$, there exists $x\in I$ with $|z-x|<\epsilon$.
Since $\varphi_{i}(x)\in I$ and
\[
\left|\varphi_{i}(z)-\varphi_{i}(x)\right|\le(1-\epsilon)|z-x|<\epsilon,
\]
it follows that $\varphi_{i}\left(I^{(\epsilon)}\right)\subset I^{(\epsilon)}$.
Thus, $\Phi$ may be regarded as a conformal IFS on $I^{(\epsilon)}$. From this,
from (\ref{eq:prop nec for bd dist}), and by the bounded distortion
property (again, see \cite[Chapter 14]{MR4661364}), there exists
$C_{0}>1$ such that
\begin{equation}
\left|\varphi_{u}'(z)\right|\le C_{0}\left|\varphi_{u}'(w)\right|\text{ for all }u\in\Lambda^{*}\text{ and }z,w\in I^{(\epsilon)}.\label{eq:by BD}
\end{equation}

Now, let $u\in\Lambda^{*}$ be given and set $f:=\varphi_{u}-\varphi_{u}(0)$.
By (\ref{eq:by BD}), for $z\in I^{(\epsilon)}$
\[
|f(z)|\le|z|\sup_{w\in I^{(\epsilon)}}\left|\varphi_{u}'(w)\right|\le2C_{0}\left|\varphi_{u}'(0)\right|.
\]
Thus, by Cauchy's estimate, for $k\in\mathbb{Z}_{>0}$ and $x\in I$
\[
\left|\varphi_{u}^{(k)}(x)\right|=\left|f^{(k)}(x)\right|\le k!\epsilon^{-k}\cdot2C_{0}\left|\varphi_{u}'(0)\right|,
\]
which completes the proof of the lemma.
\end{proof}

\subsection{\label{subsec:Function-spaces}Function spaces}

Given $k\ge1$, denote by $\mathcal{P}_{k}$ the vector space of polynomials
$p\in\mathbb{R}[X]$ with $\deg p\le k$. Let $\Vert\cdot\Vert_{2}$
be the norm on $\mathcal{P}_{k}$ such that $\Vert p\Vert_{2}^{2}:=\sum_{j=0}^{k}|a_{j}|^{2}$
for $p(X)=\sum_{j=0}^{k}a_{j}X^{j}\in\mathcal{P}_{k}$. In what follows,
all metric and topological concepts in $\mathcal{P}_{k}$ are considered
with respect to $\Vert\cdot\Vert_{2}$.

Denote by $\mathcal{A}(I)$ the vector space of real analytic maps
from $I$ to $\mathbb{R}$. For $k\ge1$ and $a\in I$, let $P_{k,a}:\mathcal{A}(I)\rightarrow\mathcal{P}_{k}$
be such that $P_{k,a}f$ equals the $k$-th order Taylor polynomial
of $f\in\mathcal{A}(I)$ at the point $a$. That is,
\begin{equation}
P_{k,a}f(X)=\sum_{j=0}^{k}\frac{f^{(j)}(a)}{j!}(X-a)^{j}.\label{eq:exp form of P_k,a}
\end{equation}

Let $\nu$ be a finitely supported probability measure on $\mathcal{A}(I)$
or a member of $\mathcal{M}\left(\mathcal{P}_{k}\right)$ for some
$k\ge1$. Given $\theta\in\mathcal{M}(I)$, we write $\nu.\theta$
for the push-forward of $\nu\times\theta$ via the map sending $(f,x)\in\mathcal{A}(I)\times I$
to $f(x)$. For $x\in I$ we write $\nu.x$ in place of $\nu.\delta_{x}$,
where $\delta_{x}$ is the Dirac mass at $x$.

\subsection{\label{subsec:Symbolic-notations}Symbolic related notations}

Given $n\ge1$ and $(\omega_{k})_{k\ge0}=\omega\in\Lambda^{\mathbb{N}}$,
write $\omega|_{n}:=\omega_{0}...\omega_{n-1}\in\Lambda^{n}$ for
the $n$th prefix of $\omega$. For a word $i_{1}...i_{n}=u\in\Lambda^{n}$,
denote by $[u]\subset\Lambda^{\mathbb{N}}$ the cylinder set determined
by $u$. That is, $[u]:=\left\{ \omega\in\Lambda^{\mathbb{N}}\::\:\omega|_{n}=u\right\} $.
Let $\mathcal{C}_{n}:=\left\{ [u]\right\} _{u\in\Lambda^{n}}$ denote
the partition of $\Lambda^{\mathbb{N}}$ into level-$n$ cylinder
sets. Given a set of words $\mathcal{U}\subset\Lambda^{*}$, we often
write $\left[\mathcal{U}\right]$ in place of $\cup_{u\in\mathcal{U}}[u]$.

Denote by $\beta:=p^{\mathbb{N}}$ the Bernoulli measure on $\Lambda^{\mathbb{N}}$
associated to $p$. That is, $\beta$ is the unique element in $\mathcal{M}\left(\Lambda^{\mathbb{N}}\right)$
such that $\beta\left([u]\right)=p_{u}$ for $u\in\Lambda^{*}$, where
$p_{u}:=p_{i_{1}}\cdot...\cdot p_{i_{n}}$ for $i_{1}...i_{n}=u\in\Lambda^{n}$.
Let $\sigma:\Lambda^{\mathbb{N}}\rightarrow\Lambda^{\mathbb{N}}$
be the left-shift map. From the ergodicity of $\left(\Lambda^{\mathbb{N}},\sigma,\beta\right)$
and by bounded distortion (see Lemma \ref{lem:bd distort}),
\begin{equation}
\chi=-\underset{n\rightarrow\infty}{\lim}\frac{1}{n}\log\left|\varphi_{\omega|_{n}}'(0)\right|\text{ for }\beta\text{-a.e. }\omega.\label{eq:chi as a.s.  limit}
\end{equation}

For $n\ge1$, let $\Psi_{n}$ denote the set of words $i_{1}...i_{k}=u\in\Lambda^{*}$
such that $\left|\varphi_{u}'(0)\right|\le2^{-n}$ and $\left|\varphi_{i_{1}...i_{s}}'(0)\right|>2^{-n}$
for each $1\le s<k$. Note that $\Psi_{n}$ is a minimal cut-set for
$\Lambda^{*}$, which means that for each $\omega\in\Lambda^{\mathbb{N}}$
there exists a unique $u\in\Psi_{n}$ with $\omega\in[u]$. This,
together with (\ref{eq:def rel of mu}), implies that
\begin{equation}
\mu=\sum_{u\in\Psi_{n}}p_{u}\cdot\varphi_{u}\mu\text{ for each }n\ge1.\label{eq:mu=00003DSum_=00007Bu in Psi_n=00007D}
\end{equation}
Moreover, from (\ref{eq:der bd from 0,1 asump}) and by bounded distortion,
there exists a global constant $C>1$ such that
\begin{equation}
C^{-1}2^{-n}\le\left|\varphi_{u}'(x)\right|\le C2^{-n}\text{ for }n\ge1,\:u\in\Psi_{n}\text{ and }x\in I.\label{eq:der for u in Psi_n}
\end{equation}

Let $\Pi:\Lambda^{\mathbb{N}}\rightarrow I$ be the coding map associated
to $\Phi$. That is,
\[
\Pi(\omega):=\underset{n\rightarrow\infty}{\lim}\varphi_{\omega|_{n}}(0)\text{ for }\omega\in\Lambda^{\mathbb{N}}.
\]
Additionally, for $n\ge1$, let $\Pi_{n}:\Lambda^{\mathbb{N}}\rightarrow\mathcal{A}(I)$
be with $\Pi_{n}(\omega)=\varphi_{\omega|_{n}}$ for $\omega\in\Lambda^{\mathbb{N}}$.

Write $\{\beta_{\omega}\}_{\omega\in\Lambda^{\mathbb{N}}}\subset\mathcal{M}\left(\Lambda^{\mathbb{N}}\right)$
for the disintegration of $\beta$ with respect to $\Pi^{-1}\mathcal{B}_{\mathbb{R}}$,
where $\mathcal{B}_{\mathbb{R}}$ is the Borel $\sigma$-algebra of
$\mathbb{R}$. For the definition and basic properties of disintegrations,
we refer the reader to \cite[Theorem 5.14]{EiWa} and the discussion
following it.

\subsection{Entropy}

Let $(X,\mathcal{F})$ be a measurable space. Given a probability
measure $\theta$ on $X$ and a countable partition $\mathcal{D}\subset\mathcal{F}$
of $X$, the entropy of $\theta$ with respect to $\mathcal{D}$ is
defined by
\[
H(\theta,\mathcal{D}):=-\sum_{D\in\mathcal{D}}\theta(D)\log\theta(D).
\]
If $\mathcal{E}\subset\mathcal{F}$ is another countable partition
of $X$, the conditional entropy given $\mathcal{E}$ is defined as
follows
\[
H(\theta,\mathcal{D}\mid\mathcal{E}):=\sum_{E\in\mathcal{E}}\theta(E)\cdot H(\theta_{E},\mathcal{D}).
\]

Throughout the paper, we repeatedly use basic properties of entropy
and conditional entropy. We frequently do so without explicit reference.
Readers are advised to consult \cite[Section 3.1]{Ho1} for details.

\subsection{\label{subsec:Dyadic-partitions}Dyadic partitions and component
measures}

Given $d\ge1$ and $n\ge0$, write $\mathcal{D}_{n}^{d}$ for the
level-$n$ dyadic partition of $\mathbb{R}^{d}$. That is,
\[
\mathcal{D}_{n}^{d}:=\left\{ \Big[\frac{m_{1}}{2^{n}},\frac{m_{1}+1}{2^{n}}\Big)\times...\times\Big[\frac{m_{d}}{2^{n}},\frac{m_{d}+1}{2^{n}}\Big)\::\:m_{1},...,m_{d}\in\mathbb{Z}\right\} .
\]
For a real number $t\ge0$, we write $\mathcal{D}_{t}^{d}$ in place
of $\mathcal{D}_{\left\lfloor t\right\rfloor }^{d}$, where $\left\lfloor t\right\rfloor $
is the integral part of $t$. We often omit the superscript $d$ when
it is clear from the context.

Dyadic entropy, i.e. entropy measured with respect to the partitions
$\mathcal{D}_{n}^{1}$, plays an important role in our arguments.
Note that by \cite{Yo}, when $\theta\in\mathcal{M}\left(\mathbb{R}\right)$
is exact dimensional,
\begin{equation}
\underset{n\rightarrow\infty}{\lim}\frac{1}{n}H\left(\theta,\mathcal{D}_{n}\right)=\dim\theta.\label{eq:ent dim =00003D exact dim}
\end{equation}

We shall also need the following two simple lemmas. They follow easily from
basic properties of entropy, and their proof is therefore omitted.
\begin{lem}
\label{lem:dyad ent =000026 lip func}Let $d\ge1$ and $\theta\in\mathcal{M}\left(\mathbb{R}^{d}\right)$
be given. Let $C>1$, and let $f:\mathrm{supp}(\theta)\rightarrow\mathbb{R}^{d}$
be bi-Lipschitz with bi-Lipschitz constant $C$. That is, $C^{-1}|x-y|\le\left|f(x)-f(y)\right|\le C|x-y|$
for $x,y\in\mathrm{supp}(\theta)$. Then for each $n\ge0$,
\[
H\left(f\theta,\mathcal{D}_{n}\right)=H\left(\theta,\mathcal{D}_{n}\right)+O_{d}\left(1+\log C\right).
\]
\end{lem}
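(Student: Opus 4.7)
The plan is to compare the dyadic partition $\mathcal{D}_n$ on $\mathbb{R}^d$ with its pullback $f^{-1}\mathcal{D}_n$ under the bi-Lipschitz map $f$, and to show that, when both are restricted to $\mathrm{supp}(\theta)$, each atom of one meets only $O_d(C^d)$ atoms of the other. Writing $\mathcal{E}_n := \{f^{-1}(D) \cap \mathrm{supp}(\theta) : D \in \mathcal{D}_n\}$, the identity $H(f\theta, \mathcal{D}_n) = H(\theta, \mathcal{E}_n)$ is immediate from the definition of pushforward, so the task reduces to bounding $|H(\theta, \mathcal{D}_n) - H(\theta, \mathcal{E}_n)|$.

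The geometric step is straightforward. A level-$n$ dyadic cell in $\mathbb{R}^d$ has diameter $\sqrt{d}\,2^{-n}$. Since $f^{-1}$ is Lipschitz on $f(\mathrm{supp}(\theta))$ with constant $C$, each atom of $\mathcal{E}_n$ has diameter at most $C\sqrt{d}\,2^{-n}$, and therefore is contained in the union of $O_d(C^d)$ cells of $\mathcal{D}_n^d$. Symmetrically, for $D \in \mathcal{D}_n^d$ the set $f(D \cap \mathrm{supp}(\theta))$ has diameter at most $C\sqrt{d}\,2^{-n}$, so $D \cap \mathrm{supp}(\theta)$ meets at most $O_d(C^d)$ atoms of $\mathcal{E}_n$.

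Having bounded the combinatorial complexity on each side, the entropy estimate follows from the standard inequality that, if every atom of a partition $\mathcal{P}$ intersects at most $N$ atoms of a partition $\mathcal{Q}$, then $H(\theta, \mathcal{Q} \mid \mathcal{P}) \le \log N$. Applied in both directions, this gives $H(\theta, \mathcal{D}_n \mid \mathcal{E}_n)$ and $H(\theta, \mathcal{E}_n \mid \mathcal{D}_n)$ both bounded by $\log O_d(C^d) = O_d(1 + \log C)$. Combining with
\[
H(\theta, \mathcal{D}_n) + H(\theta, \mathcal{E}_n \mid \mathcal{D}_n) = H(\theta, \mathcal{D}_n \vee \mathcal{E}_n) = H(\theta, \mathcal{E}_n) + H(\theta, \mathcal{D}_n \mid \mathcal{E}_n)
\]
yields the desired two-sided estimate $|H(\theta, \mathcal{D}_n) - H(\theta, \mathcal{E}_n)| = O_d(1 + \log C)$.

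There is no real obstacle here; the only point to handle with some care is that $f$ is only defined on $\mathrm{supp}(\theta)$, so one must work with the trace partitions on $\mathrm{supp}(\theta)$ rather than partitions of all of $\mathbb{R}^d$. This causes no issue because $\theta$ assigns no mass outside $\mathrm{supp}(\theta)$, and the covering counts above depend only on diameters, not on where in $\mathbb{R}^d$ the atoms sit.
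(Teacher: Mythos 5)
The paper deliberately omits the proof, stating only that the lemma ``follows easily from basic properties of entropy,'' so there is no proof on record to compare against. Your argument is correct and is precisely the routine argument being alluded to: the identity $H(f\theta,\mathcal{D}_n)=H(\theta,\mathcal{E}_n)$ for the pullback partition $\mathcal{E}_n$, the diameter/bi-Lipschitz bound showing each atom of either partition meets at most $O_d(C^d)$ atoms of the other, the inequality $H(\theta,\mathcal{Q}\mid\mathcal{P})\le\log N$, and the join identity $H(\theta,\mathcal{D}_n)+H(\theta,\mathcal{E}_n\mid\mathcal{D}_n)=H(\theta,\mathcal{E}_n)+H(\theta,\mathcal{D}_n\mid\mathcal{E}_n)$ all combine exactly as you write. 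Your remark about restricting to trace partitions on $\mathrm{supp}(\theta)$ is the right thing to note, and compact support of $\theta$ (built into $\mathcal{M}(\mathbb{R}^d)$ here) guarantees all entropies are finite, so no further care is needed.
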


\begin{lem}
\label{lem:cont of ent wrt close maps}Let $(X,\mathcal{F},\theta)$ be a probability space, and let $f,g:X\rightarrow \mathbb{R}$ be measurable. Let $n\ge0$, and suppose that $\left|f(x)-g(x)\right|\le2^{-n}$ for all $x\in X$. Then,
\[
H\left(f\theta,\mathcal{D}_{n}\right)=H\left(g\theta,\mathcal{D}_{n}\right)+O\left(1\right).
\]
\end{lem}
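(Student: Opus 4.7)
The plan is to compare the two partitions $f^{-1}(\mathcal{D}_{n})$ and $g^{-1}(\mathcal{D}_{n})$ of $X$ directly, exploiting the fact that the pointwise gap between $f$ and $g$ is at most one dyadic unit at scale $2^{-n}$. Since $H(f\theta,\mathcal{D}_{n})=H(\theta,f^{-1}(\mathcal{D}_{n}))$ and likewise for $g$, the statement reduces to showing that these two pullback partitions differ by a perturbation of bounded entropy, independent of $n$ and $\theta$.

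To make this precise, I would encode the level-$n$ dyadic atom containing a real number $y$ by the integer $\lfloor 2^{n}y\rfloor$, and set $\tilde{f}:=\lfloor 2^{n}f\rfloor$ and $\tilde{g}:=\lfloor 2^{n}g\rfloor$. From the elementary fact that $|a-b|\le 1$ implies $|\lfloor a\rfloor-\lfloor b\rfloor|\le 1$, applied with $a=2^{n}f(x)$ and $b=2^{n}g(x)$, the \emph{shift}
\[
s(x):=\tilde{f}(x)-\tilde{g}(x)
\]
is a measurable function from $X$ to $\{-1,0,1\}$.

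The key step is then that, since $\tilde{f}=\tilde{g}+s$, the partition $f^{-1}(\mathcal{D}_{n})$ is refined by the join of $g^{-1}(\mathcal{D}_{n})$ with the at-most-three-atom partition $\mathcal{S}:=\{s^{-1}(i)\}_{i\in\{-1,0,1\}}$. Standard entropy inequalities then give
\[
H(f\theta,\mathcal{D}_{n})\le H(\theta,g^{-1}(\mathcal{D}_{n})\vee\mathcal{S})\le H(g\theta,\mathcal{D}_{n})+H(\theta,\mathcal{S})\le H(g\theta,\mathcal{D}_{n})+\log 3,
\]
and interchanging the roles of $f$ and $g$ yields the reverse inequality, so the implicit constant can be taken to be $\log 3$.

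The argument is elementary and I do not foresee a genuine obstacle; the only conceptual point is choosing the encoding $y\mapsto\lfloor 2^{n}y\rfloor$ so that the pointwise closeness of $f$ and $g$ at scale $2^{-n}$ translates cleanly into a discrete bound of size at most one on integer labels of dyadic atoms, which in turn allows the discrepancy between the two partitions to be absorbed by a three-atom auxiliary partition.
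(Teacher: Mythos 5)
Your argument is correct and is exactly the kind of elementary entropy argument the paper has in mind when it says the lemma ``follows easily from basic properties of entropy'' and omits the proof: you reduce to the pullback partitions $f^{-1}(\mathcal{D}_n)$ and $g^{-1}(\mathcal{D}_n)$, observe that the integer shift $s=\lfloor 2^nf\rfloor-\lfloor 2^ng\rfloor$ takes values in $\{-1,0,1\}$, so that $g^{-1}(\mathcal{D}_n)\vee\mathcal{S}$ refines $f^{-1}(\mathcal{D}_n)$, and then monotonicity and subadditivity give the two-sided bound with constant $\log 3$. No gaps.
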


We also need to consider dyadic partitions of vector spaces of polynomials.
For $k\ge1$ and $n\ge0$, we denote by $\mathcal{D}_{n}^{\mathcal{P}_{k}}$
the level-$n$ dyadic partition of $\mathcal{P}_{k}$, which is given
by 
\[
\mathcal{D}_{n}^{\mathcal{P}_{k}}:=\left\{ \left\{ \sum_{j=0}^{k}a_{j}X^{j}\::\:(a_{0},...,a_{k})\in D\right\} \::\:D\in\mathcal{D}_{n}^{k+1}\right\} .
\]
Thus, $\mathcal{D}_{n}^{\mathcal{P}_{k}}$ is defined by identifying
$\mathcal{P}_{k}$ with $\mathbb{R}^{k+1}$ in the natural way. We omit
the superscript $\mathcal{P}_{k}$ when it is clear from the context.

Given $\theta\in\mathcal{M}\left(\mathbb{R}\right)$, $n\ge0$, and
$x\in\mathbb{R}$ with $\theta\left(\mathcal{D}_{n}(x)\right)>0$,
we write $\theta_{x,n}$ is place of the conditional measure $\theta_{\mathcal{D}_{n}(x)}$.
Similarly, for $\nu\in\mathcal{M}\left(\mathcal{P}_{k}\right)$ and
$p\in\mathcal{P}_{k}$ with $\nu\left(\mathcal{D}_{n}(p)\right)>0$,
we write $\nu_{p,n}$ in place of $\nu_{\mathcal{D}_{n}(p)}$. The
measures $\theta_{x,n}$ and $\nu_{p,n}$ are said to be level-$n$
components of $\theta$ and $\nu$ respectively.

Throughout the rest of the paper, we use the probabilistic notations
introduced in \cite[Section 2.2]{Ho1}; readers are encouraged to
consult this reference for further details. In particular, we often
consider $\theta_{x,n}$ and $\nu_{p,n}$ as random measures in a
natural way.

\section{\label{sec:An-entropy-increase-result}An entropy increase result}

The purpose of this section is to prove the following theorem, which,
as explained in Section \ref{subsec:About-the-proof}, plays a key
role in the proof of Theorem \ref{thm:main thm}. In what follows,
we denote by $C^{1}(I,I)$ the set of $C^{1}$-maps from $I$ into
itself.
\begin{thm}
\label{thm:ent inc with poly}Suppose that $\dim\mu<1$. Then for
each $k\in\mathbb{Z}_{>0}$ and $0<\epsilon<1$ there exists $\rho=\rho(k,\epsilon)>0$
such that the following holds for all $n\ge N(k,\epsilon)\ge1$. Let
$\nu\in\mathcal{M}(\mathcal{P}_{k})$ be such that $\Vert p\Vert_{2}\le\epsilon^{-1}$
and $\left|p'(x)\right|\ge\epsilon$ for all $p\in\mathrm{supp}(\nu)$
and $x\in I$. Suppose also that $\frac{1}{n}H\left(\nu,\mathcal{D}_{n}\right)\ge\epsilon$,
and let $\psi\in C^{1}(I,I)$ be with $\epsilon\le\left|\psi'(x)\right|\le\epsilon^{-1}$
for all $x\in I$. Then $\frac{1}{n}H\left(\nu.\left(\psi\mu\right),\mathcal{D}_{n}\right)\ge\dim\mu+\rho$.
\end{thm}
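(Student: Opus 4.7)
The plan is to follow the template of Hochman's entropy-increase theorem from \cite{Ho1} (and its non-abelian extensions in \cite{BHR,Ho}), the main adaptation being to handle the nonlinear action $(p,y)\mapsto p(y)$ by local Taylor expansion on each dyadic component. I argue by contradiction: assume $\tfrac{1}{n}H(\nu.(\psi\mu),\mathcal{D}_{n}) < \dim\mu + \rho$ for a small $\rho = \rho(k,\epsilon) > 0$ to be chosen at the end. Since $\psi$ is bi-Lipschitz on $I$ with bi-Lipschitz constant $\epsilon^{-1}$, Lemma \ref{lem:dyad ent =000026 lip func} together with the exact dimensionality of $\mu$ yields $\tfrac{1}{n}H(\psi\mu,\mathcal{D}_{n}) = \dim\mu + o_{n}(1)$, so the entropy of the convolution exceeds that of $\psi\mu$ by at most $\rho + o_{n}(1)$.

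I then decompose this excess via the standard multi-scale component identity: writing the convolution measure as an average of $\nu_{p_{0},s}.(\psi\mu)_{y_{0},s}$ over level-$s$ components at a random intermediate scale $s$, the average level-$(s+k')$ entropy of these component convolutions accounts for the total entropy up to $O(k'/n)$. Within a level-$s$ component, $p$ varies in a ball of radius $2^{-s}$ in $\mathcal{P}_{k}$ and $y$ in an interval of length $2^{-s}$, so Taylor expansion about $(p_{0},y_{0})$ gives
\[
p(y) = p_{0}(y_{0}) + \mathrm{ev}_{y_{0}}(p-p_{0}) + p_{0}'(y_{0})(y-y_{0}) + O(2^{-2s}),
\]
where the error uses that $\|p_{0}\|_{2}\le\epsilon^{-1}$ bounds all derivatives of $p_{0}$ on $I$. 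By Lemma \ref{lem:cont of ent wrt close maps}, the level-$(s+k')$ entropy of $\nu_{p_{0},s}.(\psi\mu)_{y_{0},s}$ coincides, up to $O(1)$, with the entropy of the genuine additive convolution on $\mathbb{R}$ of $\mathrm{ev}_{y_{0}}\nu_{p_{0},s}$ with the affine image of $(\psi\mu)_{y_{0},s}$ under $y\mapsto p_{0}'(y_{0})(y-y_{0})$.

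To this additive convolution I apply Hochman's inverse theorem \cite[Theorem 2.7]{Ho1}: for a suitable hierarchy $1\ll k'\ll n$, if the convolution fails to gain definite entropy at scale $s+k'$, then for a $(1-\epsilon')$-fraction of scales $s$ the corresponding random component pair satisfies either $\mathrm{ev}_{y_{0}}\nu_{p_{0},s}$ is $(k',\epsilon')$-concentrated near an atom, or $(\psi\mu)_{y_{0},s}$ is $(k',\epsilon')$-uniform on $\mathcal{D}_{s}$. The uniform alternative must be rare: since $\psi\mu$ is exact dimensional with dimension $\dim\mu<1$, the entropy-porosity estimate (cf.\ \cite[Section 5]{Ho1}) bounds the proportion of (scale, component) pairs at which $(\psi\mu)_{y_{0},s}$ is nearly uniform away from $1$ by a quantity depending only on $1-\dim\mu$. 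Consequently, for a definite proportion of $(s,p_{0},y_{0})$, the projection $\mathrm{ev}_{y_{0}}\nu_{p_{0},s}$ is approximately atomic at scale $s+k'$.

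The remaining step, which I expect to be the main obstacle, is to lift this pointwise atomicness on $\mathbb{R}$ back to $\mathcal{P}_{k}\cong\mathbb{R}^{k+1}$. Because $\Phi$ has no common fixed point, $\mu$ and hence $\psi\mu$ are nonatomic, so $\mathrm{supp}(\psi\mu)$ is perfect and contains $k+1$ points $y^{(0)},\ldots,y^{(k)}$ that are $\eta$-separated for some $\eta=\eta(\Phi)>0$. For such a tuple, Lagrange interpolation provides a linear isomorphism $\mathcal{P}_{k}\to\mathbb{R}^{k+1}$, $p\mapsto(p(y^{(j)}))_{j=0}^{k}$, whose inverse has operator norm bounded by a constant depending only on $k$ and $\eta$. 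A Fubini-type averaging over $(\psi\mu)^{\otimes(k+1)}$ then shows that simultaneous atomicness of all $k+1$ projections $\mathrm{ev}_{y^{(j)}}\nu_{p_{0},s}$ holds for a positive fraction of tuples and components, and Lagrange interpolation converts this simultaneous atomicness into $(k',\epsilon'')$-atomicness of $\nu_{p_{0},s}$ itself in $\mathcal{P}_{k}$. Integrating over $s$ yields $\tfrac{1}{n}H(\nu,\mathcal{D}_{n})<\epsilon$, contradicting the hypothesis. The delicate point is ensuring that the exceptional sets produced by the inverse theorem at different scales and different components of $\nu$ can be controlled simultaneously across a rich enough set of $(k+1)$-tuples in $\mathrm{supp}(\psi\mu)^{k+1}$; this requires careful quantitative bookkeeping of the constants in \cite[Theorem 2.7]{Ho1} and is where the strict inequality $\dim\mu<1$ is used in an essential way.
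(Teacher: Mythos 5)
Your proposal goes in the contrapositive direction, unwinding Hochman's inverse theorem and then attempting to lift the resulting atomicity from $\mathbb{R}$ back to $\mathcal{P}_{k}$; the paper's proof goes forward, and this is not a cosmetic difference. The paper's Lemma \ref{lem:ent of nu imp ent of push of comp of nu} proves directly that some positive proportion of the scalar projections $\nu_{p,i}.x$ (for $(i,p,x)$ distributed according to $\lambda_n\times\nu\times\psi\mu$) inherit nontrivial entropy from $\nu$. This is a one-way use of the evaluation map $L_{\overline{x}}:\mathcal{P}_k\to\mathbb{R}^{k+1}$ at a $\delta$-separated tuple $\overline{x}$: since $L_{\overline{x}}$ is bi-Lipschitz and entropy is subadditive over the coordinate projections, nontrivial entropy of $\nu_{p,i}$ forces nontrivial entropy of at least one projection $\nu_{p,i}.x_j$, and averaging over $\theta^{\otimes(k+1)}$ handles the rest. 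Once this is in hand, the paper applies the packaged entropy-growth statement (Theorem \ref{thm:ent inc in R}) as a black box, together with the uniform entropy dimension of $\psi\mu$ (Proposition \ref{prop:uni ent dim}, Corollary \ref{cor:uni ent dim comp of comp}) to verify its ``not saturated'' hypothesis, and the linearization Lemma \ref{lem:linearization} to pass between $\nu_{p,i}.\theta_{x,i}$ and the genuine Euclidean convolution.

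Your contrapositive route uses the same three devices --- local Taylor linearization, the inverse theorem underlying the entropy-increase statement, and evaluation at $k+1$ separated points of $\mathrm{supp}(\psi\mu)$ --- but it has to run the lifting in the harder direction: from ``each scalar projection $\mathrm{ev}_{y}\nu_{p_0,s}$ is nearly atomic'' to ``$\nu_{p_0,s}$ is nearly atomic in $\mathcal{P}_k$.'' This requires the atomicity to hold \emph{simultaneously} at $k+1$ separated evaluation points for the \emph{same} scale $s$ and the \emph{same} component $\nu_{p_0,s}$, while the inverse theorem only gives you atomicity for a high-probability set of $(s,p_0,y_0)$ tuples, with the exceptional set depending nontrivially on $y_0$. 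You identify this as the delicate point, and you are right to; the Fubini argument you sketch must also be compatible with the scale bookkeeping that the inverse theorem imposes (the dichotomy is over random intermediate scales, not per-component), and with the fact that the affine rescaling $S_{p_0'(y_0)}$ changes with $y_0$. None of this is clearly fatal, but the paper's Lemma \ref{lem:ent of nu imp ent of push of comp of nu} avoids it entirely by needing only \emph{one} good projection per component rather than $k+1$ simultaneously good ones. Two smaller remarks: the paper uses uniform entropy dimension (components of $\psi\mu$ have entropy close to $\dim\mu$, Corollary \ref{cor:uni ent dim comp of comp}), which is sharper than the entropy-porosity bound you invoke and is exactly what Theorem \ref{thm:ent inc in R} asks for; and the Taylor error estimate $O(2^{-2s})$ in your linearization needs the lower bound $\left|p'(x)\right|\ge\epsilon$ (not just the norm bound) in order to renormalize, precisely as in (\ref{eq:by linearization ect}) of the paper's argument.
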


In Sections \ref{subsec:Entropy-growth-under conv in R}, \ref{subsec:Uniform-entropy-dimension}
and \ref{subsec:add prep for ent inc}, we make the necessary preparations
for the proof of Theorem \ref{thm:ent inc with poly}, which is carried
out in Section \ref{subsec:Proof-of-the ent in result}.

\subsection{\label{subsec:Entropy-growth-under conv in R}Entropy growth under
convolution in $\mathbb{R}$}

The following theorem, which is the main ingredient in the proof of
Theorem \ref{thm:ent inc with poly}, follows directly from Hochman's
work \cite[Theorem 2.8]{Ho1}; for more details, see \cite[Section 4.1]{BHR}.
\begin{thm}
\label{thm:ent inc in R}For every $0<\epsilon<1$, $m\ge1$ and $0<\eta<\eta(\epsilon)$,
there exists $\delta=\delta(\epsilon,m,\eta)>0$, such that for all
$n\ge N(\epsilon,m,\eta)\ge1$ the following holds. Let $i\in\mathbb{Z}_{>0}$
and $\xi,\theta\in\mathcal{M}\left(\mathbb{R}\right)$ be such that,
\begin{enumerate}
\item $\mathrm{diam}\left(\mathrm{supp}(\xi)\right),\mathrm{diam}\left(\mathrm{supp}(\theta)\right)\le\epsilon^{-1}2^{-i}$;
\item $\frac{1}{n}H\left(\xi,\mathcal{D}_{i+n}\right)>\epsilon$;
\item $\mathbb{P}_{i\le j\le i+n}\left\{ \frac{1}{m}H\left(\theta_{x,j},\mathcal{D}_{j+m}\right)<1-\epsilon\right\} >1-\eta$.
\end{enumerate}
Then,
\[
\frac{1}{n}H\left(\xi*\theta,\mathcal{D}_{i+n}\right)\ge\frac{1}{n}H\left(\theta,\mathcal{D}_{i+n}\right)+\delta.
\]
\end{thm}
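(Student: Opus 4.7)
The plan is to deduce this from Hochman's inverse theorem \cite[Theorem~2.8]{Ho1} by a contradiction argument, following the template of \cite[Section~4.1]{BHR}. The first step is to rescale so that entropies are measured at scale $n$ rather than $i+n$. Applying $S_{2^{i}}$ sends $\xi,\theta$ to $\xi':=S_{2^{i}}\xi$ and $\theta':=S_{2^{i}}\theta$, supported in intervals of diameter $\le\epsilon^{-1}$. Under this rescaling $\tfrac{1}{n}H(\xi,\mathcal{D}_{i+n})=\tfrac{1}{n}H(\xi',\mathcal{D}_{n})$ and $\tfrac{1}{n}H(\xi*\theta,\mathcal{D}_{i+n})=\tfrac{1}{n}H(\xi'*\theta',\mathcal{D}_{n})$, while a component $\theta_{x,j}$ becomes $\theta'_{y,\ell}$ with $\ell:=j-i\in[0,n]$, so hypothesis (3) transforms into $\mathbb{P}_{0\le\ell\le n}\bigl(\tfrac{1}{m}H(\theta'_{y,\ell},\mathcal{D}_{\ell+m})<1-\epsilon\bigr)>1-\eta$. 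A further dilation by a factor of $\epsilon$ normalizes the supports into $[-1,1]$, at the cost of an $O_{\epsilon}(1)$ additive entropy error absorbable into the threshold $N(\epsilon,m,\eta)$.

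Next I would argue by contradiction. Assume $\tfrac{1}{n}H(\xi'*\theta',\mathcal{D}_{n})<\tfrac{1}{n}H(\theta',\mathcal{D}_{n})+\delta$ with $\delta>0$ to be chosen. Fix an auxiliary parameter $\epsilon'$ with $\epsilon'\ll\min\{\epsilon,\eta\}$. Applying Hochman's inverse theorem with parameters $(\epsilon',m)$ to $\xi',\theta'$ yields a threshold $\delta_{0}(\epsilon',m)>0$ such that, on setting $\delta:=\delta_{0}/2$, there must exist a set $E\subseteq\{0,\ldots,n\}$ of size $|E|\ge(1-\epsilon')(n+1)$ such that for every $\ell\in E$, with probability at least $1-\epsilon'$ over independent random components $\xi'_{x,\ell}$ and $\theta'_{y,\ell}$, the component $\xi'_{x,\ell}$ is $(\epsilon',m)$-atomic (concentrated on few points at resolution $m$) while $\theta'_{y,\ell}$ is $(\epsilon',m)$-uniform; in particular $\tfrac{1}{m}H(\theta'_{y,\ell},\mathcal{D}_{\ell+m})>1-\epsilon'$.

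For $\epsilon'<\epsilon$ the events $A:=\{\tfrac{1}{m}H(\theta'_{y,\ell},\mathcal{D}_{\ell+m})>1-\epsilon'\}$ and $B:=\{\tfrac{1}{m}H(\theta'_{y,\ell},\mathcal{D}_{\ell+m})<1-\epsilon\}$ are disjoint on the joint probability space of $\ell\in[0,n]$ and random components. The inverse theorem guarantees $\mathbb{P}(A)\ge(1-\epsilon')^{2}$ (via $\mathbb{P}(\ell\in E)\ge 1-\epsilon'$ and conditional probability $\ge 1-\epsilon'$), while the transformed hypothesis (3) gives $\mathbb{P}(B)\ge1-\eta$. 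These two bounds are incompatible once $(1-\epsilon')^{2}+(1-\eta)>1$, which is achievable by first choosing $\epsilon'<\epsilon/3$ and then $\eta$ accordingly small. This contradiction yields the theorem with $\delta(\epsilon,m,\eta):=\delta_{0}(\epsilon',m)/2>0$.

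The main obstacle is the careful nested choice of the parameters $(\epsilon',m,\eta,\delta)$, together with the translation of Hochman's inverse theorem (typically stated for measures on $[-1,1]$) to our diameter-$\epsilon^{-1}$ setting. The role of the positive-entropy hypothesis (2) on $\xi$ is to prevent the degenerate situation where $\xi$ is close to a point mass (for which $\xi*\theta\approx\theta$ trivially); more precisely, it rules out the possibility that the ``atomic $\xi'_{x,\ell}$'' branch of the inverse theorem's dichotomy holds at most scales as a standalone explanation, since, via the standard component-entropy identity, this would force $\tfrac{1}{n}H(\xi',\mathcal{D}_{n})$ to be small and contradict (2).
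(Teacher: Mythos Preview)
Your approach matches the paper, which gives no proof of this statement and simply cites \cite[Theorem~2.8]{Ho1} with a pointer to \cite[Section~4.1]{BHR} for the details.

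There is, however, a gap in your middle paragraph. Hochman's inverse theorem yields a \emph{disjunction}: for most scales $\ell$ and most components, $\xi'_{x,\ell}$ is $(\epsilon',m)$-atomic \emph{or} $\theta'_{y,\ell}$ is $(\epsilon',m)$-uniform --- not both simultaneously. Consequently the bound $\mathbb{P}(A)\ge(1-\epsilon')^{2}$ does not follow from the inverse theorem alone. The correct order of steps (which you relegate to a remark in your final paragraph) is: first use hypothesis~(2) together with the multiscale entropy identity $\tfrac{1}{n}H(\xi',\mathcal{D}_{n})=\mathbb{E}_{0\le\ell\le n}\bigl(\tfrac{1}{m}H(\xi'_{x,\ell},\mathcal{D}_{\ell+m})\bigr)+O(m/n)$ to show that the set of scales at which $\xi'$-components are atomic has density at most $1-c\epsilon$ for some absolute $c>0$; the disjunction then forces $\theta'$-components to be uniform on a set of scales of density at least $c\epsilon-O(\epsilon')>0$, and this contradicts~(3) once $\eta$ is chosen below that threshold. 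This is precisely why the statement requires $\eta<\eta(\epsilon)$, and your argument as written does not make that dependence visible.
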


\subsection{\label{subsec:Uniform-entropy-dimension}Uniform entropy dimension}

The following proposition, which is required to apply Theorem \ref{thm:ent inc in R}
in the proof of Theorem \ref{thm:ent inc with poly}, is the main
result of this subsection. Roughly speaking, in the terminology of
\cite{Ho1}, it states that $C^{1}$-images of $\mu$ have uniform
entropy dimension $\dim\mu$.
\begin{prop}
\label{prop:uni ent dim}For each $0<\epsilon<1$, $m\ge M(\epsilon)\ge1$
and $n\ge N(\epsilon,m)\ge1$ the following holds. Let $\psi\in C^{1}(I)$
be with $\epsilon\le\left|\psi'(x)\right|\le\epsilon^{-1}$ for $x\in I$.
Then, setting $\theta:=\psi\mu$,
\[
\mathbb{P}_{1\le i\le n}\left(\left|\frac{1}{m}H\left(\theta_{x,i},\mathcal{D}_{i+m}\right)-\dim\mu\right|<\epsilon\right)>1-\epsilon.
\]
\end{prop}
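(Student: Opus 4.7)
The plan is to show that for most $(x, i)$ with $i$ uniform in $[1,n]$ and $x \sim \theta$, the component $\theta_{x,i}$ is, up to small error, a convex combination of a bounded number of pushforwards $(\psi \circ \varphi_u)\mu$ with $u \in \Psi_{i+K}$, each having entropy at scale $i+m$ close to $m \dim\mu$; the main ingredients are the self-conformal structure of $\mu$, bounded distortion, and exact-dimensionality of $\mu$ (Feng--Hu). Fix an integer $K = K(\epsilon) \ge 1$ large enough that $C \epsilon^{-1} \cdot 2^{-K} < 1/10$, where $C > 1$ is as in Lemma \ref{lem:bd distort}(\ref{enu:bd prop}) and (\ref{eq:der for u in Psi_n}). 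Applying $\psi$ to (\ref{eq:mu=00003DSum_=00007Bu in Psi_n=00007D}) at level $i + K$ yields $\theta = \sum_{u \in \Psi_{i+K}} p_u (\psi \circ \varphi_u)\mu$, and by (\ref{eq:der for u in Psi_n}) and the hypotheses on $\psi$ each $\psi \circ \varphi_u$ with $u \in \Psi_{i+K}$ is bi-Lipschitz on $I$ with constants of order $2^{-(i+K)}$ (up to a factor depending only on $\epsilon$), so its image $(\psi \circ \varphi_u)(I)$ is an interval of length at most $\tfrac{1}{10} \cdot 2^{-i}$.

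\emph{Entropy of each piece.} Applying Lemma \ref{lem:dyad ent =000026 lip func} to $S_{2^{i+K}} \circ \psi \circ \varphi_u$ gives
\[
H\bigl((\psi \circ \varphi_u)\mu,\, \mathcal{D}_{i+m}\bigr) = H(\mu, \mathcal{D}_{m-K}) + O_\epsilon(1),
\]
uniformly in $u \in \Psi_{i+K}$ and $i \geq 1$. By exact-dimensionality of $\mu$ and (\ref{eq:ent dim =00003D exact dim}), $\tfrac{1}{m} H(\mu, \mathcal{D}_{m-K}) \to \dim \mu$ as $m \to \infty$, so for $m \ge M(\epsilon)$ sufficiently large,
\[
\bigl| \tfrac{1}{m} H\bigl((\psi \circ \varphi_u)\mu,\, \mathcal{D}_{i+m}\bigr) - \dim \mu \bigr| < \epsilon / 4
\]
uniformly in $u$ and $i$.

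\emph{Recombination.} Let $\mathcal{B} \subset I \times [1, n]$ be the set of pairs $(x, i)$ for which some image $(\psi \circ \varphi_u)(I)$, $u \in \Psi_{i+K}$, that contributes nontrivially to $\theta|_{\mathcal{D}_i(x)}$ is split by the boundary of $\mathcal{D}_i(x)$. Off $\mathcal{B}$, the measure $\theta_{x,i}$ is a convex combination of at most $K' = K'(\epsilon)$ full pushforwards $(\psi \circ \varphi_u)\mu$ whose supports sit inside $\mathcal{D}_i(x)$. Concavity of entropy (lower bound) and subadditivity (upper bound, with combinatorial term $\le \log K'$, negligible once $m \gg \log K'$) combined with the previous step yield $|\tfrac{1}{m} H(\theta_{x,i}, \mathcal{D}_{i+m}) - \dim \mu| < \epsilon$ for all $(x,i) \notin \mathcal{B}$.

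The main obstacle is to bound $(\lambda_n \times \theta)(\mathcal{B})$. For each fixed $i$, the bad set lies in the $2^{-(i+K)}$-neighbourhood of a finite collection of endpoints (the points $(\psi\circ\varphi_u)(0)$ and $(\psi\circ\varphi_u)(1)$ for $u \in \Psi_{i+K}$ contributing to $\mathcal{D}_i(x)$), whose $\theta$-measure tends to zero as $i \to \infty$ by non-atomicity of $\theta$. Averaging over $i \in [1, n]$ and choosing $N(\epsilon, m)$ large then makes $(\lambda_n \times \theta)(\mathcal{B}) < \epsilon$, completing the proof.
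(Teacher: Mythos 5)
There is a genuine gap in your upper bound.

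Your ``recombination'' step asserts that, away from a small bad set $\mathcal{B}$, the component $\theta_{x,i}$ is a convex combination of at most $K'=K'(\epsilon)$ pushforwards $(\psi\circ\varphi_u)\mu$ with $u\in\Psi_{i+K}$, and that the combinatorial term in the subadditivity bound is $\le\log K'$, hence negligible. This is false in general. Under the standing hypotheses $\Phi$ need not satisfy any separation condition, so the images $(\psi\circ\varphi_u)(I)$, $u\in\Psi_{i+K}$, can overlap with arbitrary multiplicity inside a single dyadic cell $\mathcal{D}_i(x)$: the number of words in $\Psi_{i+K}$ whose image is contained in $\mathcal{D}_i(x)$ is not bounded in terms of $\epsilon$ (it can grow exponentially in $i$), and even the entropy of the weight vector $\bigl(p_u/\theta(\mathcal{D}_i(x))\bigr)_u$ is not $o(m)$ --- it is precisely the quantity that drives the dimension drop $H(p)/\chi-\dim\mu$, so assuming it is small is circular. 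For this reason, the concavity argument you give yields only the one-sided estimate $\frac{1}{m}H(\theta_{x,i},\mathcal{D}_{i+m})\gtrsim\dim\mu-\epsilon$; it does not yield an upper bound. The paper deliberately proves only this lower bound pointwise (Proposition \ref{prop:bef uni ent dim prop}), and obtains the two-sided statement by a separate mechanism: it computes $\mathbb{E}_{1\le i\le n}\bigl(\frac{1}{m}H(\theta_{x,i},\mathcal{D}_{i+m})\bigr)=\dim\mu+O(\epsilon)$ from \cite[Lemma 3.4]{Ho1}, Lemma \ref{lem:dyad ent =000026 lip func}, and (\ref{eq:ent dim =00003D exact dim}), and then a Markov-type argument forces the upper bound to hold with high probability as well. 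Your argument is missing this expectation step, and cannot be repaired within your framework without some replacement for it.

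A secondary issue is your treatment of the bad set $\mathcal{B}$. You claim its $(\lambda_n\times\theta)$-measure is controlled by non-atomicity of $\theta$, but non-atomicity alone does not control the $\theta$-mass of small neighbourhoods of dyadic boundaries uniformly over scales $i$: the number of boundary points grows like $2^i$, and the neighbourhood radius shrinks like $2^{-i}$, so no soft argument applies. The paper needs the stronger ``uniform continuity across scales'' property (Lemma \ref{lem:uni cont ac scale} and Corollary \ref{cor:from uni cont}), which is derived from the self-conformal relation (\ref{eq:mu=00003DSum_=00007Bu in Psi_n=00007D}) together with bounded distortion and non-atomicity, not from non-atomicity alone. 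Your proof would need to invoke that lemma (or reprove it) rather than appeal to non-atomicity.
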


The proof of the proposition requires some preparations. We begin
with the following lemma. In the language of \cite[Section 3.2]{BHR},
it says that $C^{1}$-images of $\mu$ are uniformly continuous across
scales.
\begin{lem}
\label{lem:uni cont ac scale}For every $0<\epsilon<1$ there exists
$\delta>0$ such that the following holds. Let $\psi\in C^{1}(I)$
be with $\epsilon\le\left|\psi'(x)\right|\le\epsilon^{-1}$ for $x\in I$.
Then,
\[
\psi\mu\left(B(x,\delta r)\right)\le\epsilon\cdot\psi\mu\left(B(x,r)\right)\text{ for each }x\in\mathbb{R}\text{ and }0<r<1.
\]
\end{lem}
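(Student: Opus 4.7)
The lemma says that $C^{1}$-images of $\mu$ are uniformly continuous across scales, a property introduced in \cite[Section 3.2]{BHR}. The proof plan has two parts.

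First, I would reduce the statement to a corresponding statement for $\mu$ itself. Since $\psi$ is bi-Lipschitz on $I$ with constants in $[\epsilon,\epsilon^{-1}]$, for $x=\psi(y)$ with $y\in I$ one obtains the sandwich
\[
\mu\bigl(B(y,\epsilon t)\cap I\bigr) \le \psi\mu\bigl(B(x,t)\bigr) \le \mu\bigl(B(y,\epsilon^{-1}t)\cap I\bigr).
\]
For $x\notin\psi(I)$, either $B(x,\delta r)$ is disjoint from $\psi(I)$ (giving a trivial inequality) or the nearest point of $\psi(I)$ to $x$ lies within $\delta r$, which reduces to the previous case by the connectedness of $\psi(I)$. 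Applying the sandwich at scales $t=r$ and $t=\delta r$ with $\delta$ chosen proportional to $\epsilon^{2}\delta'$ reduces the lemma to the following statement about $\mu$: for every $\epsilon'>0$ there is $\delta'>0$ with $\mu(B(y,\delta's))\le\epsilon'\mu(B(y,s))$ for all $y\in\mathbb{R}$ and $0<s<1$.

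Second, for this reduced statement, I would use the following iteration: setting $F(\delta'):=\sup_{y,s:\mu(B(y,s))>0}\mu(B(y,\delta's))/\mu(B(y,s))$, the function $F$ is non-decreasing, and a single uniform bound $F(\delta_{0})\le c_{0}<1$ for some $\delta_{0}\in(0,1)$ yields $F(\delta_{0}^{k})\le c_{0}^{k}$ by applying the bound at scale $s'=\delta_{0}^{k-1}s$ and iterating. To obtain such a contraction, I would use the self-conformal decomposition $\mu=\sum_{u\in\Psi_{n}}p_{u}\varphi_{u}\mu$ at depth $n$ chosen maximal so that $\varphi_{\omega|_{n}}(I)\subset B(y,s)$ for some $\omega$ with $\Pi(\omega)=y$; by bounded distortion (Lemma~\ref{lem:bd distort}), this gives $2^{-n}\asymp s$. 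Since the no-common-fixed-point hypothesis forces $K_{\Phi}$ to have positive diameter, the cylinder $\varphi_{\omega|_{n}}(K_{\Phi})$ contains a point $y''$ at distance $\gtrsim s$ from $y$, which lies in the annulus $B(y,s)\setminus B(y,\delta_{0}s)$ once $\delta_{0}$ is smaller than a constant depending only on $\mathrm{diam}(K_{\Phi})$ and the bounded distortion constant. A Frostman-type lower bound on the mass of $\mu$ near $y''$, derived from self-conformality, then ensures the annulus carries a fixed positive fraction of $\mu(B(y,s))$.

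The main obstacle is obtaining the contraction uniformly in $(y,s)$ even in the presence of overlaps: several pieces $\varphi_{u'}\mu$ with $u'\neq\omega|_{n}$ may overlap both $B(y,s)$ and $B(y,\delta_{0}s)$, and their combined contributions must be controlled so that the annulus still carries a positive fraction of $\mu(B(y,s))$. Resolving this requires a careful accounting of all pieces intersecting $B(y,s)$ using bounded distortion together with an inductive argument on scales, leveraging that the overlapping pieces also carry comparable ratios at scale $s$.
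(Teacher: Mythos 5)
Your plan diverges from the paper's, and the key step is left as a gap that would not be easy to close in the form you describe. You aim to (i) reduce to $\psi=\mathrm{id}$ via the bi-Lipschitz sandwich and (ii) establish a one-step contraction $\mu\bigl(B(y,\delta_0 s)\bigr)\le c_0\,\mu\bigl(B(y,s)\bigr)$ with $c_0<1$, then iterate. Part (i) is fine, but part (ii) is where the argument breaks. Your justification for the base case --- pick $\omega$ with $\Pi(\omega)$ near $y$ and a depth $n$ with $|\varphi_{\omega|_n}'|\asymp s$, find a point $y''\in\varphi_{\omega|_n}(K_\Phi)$ in the annulus using $\mathrm{diam}(K_\Phi)>0$, and invoke a Frostman-type bound near $y''$ --- only produces a lower bound on the annulus mass of order $p_{\omega|_n}$ (times an absolute constant coming from nonatomicity, which you do not explicitly invoke but would need). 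It does not produce a lower bound that is a fixed \emph{fraction} of $\mu\bigl(B(y,s)\bigr)$. When there are overlaps, many words $u\in\Psi_n$ can have $\varphi_u(I)$ meeting $B(y,s)$, so $\mu\bigl(B(y,s)\bigr)$ can be much larger than $p_{\omega|_n}$; the single cylinder through $y$ then contributes a negligible fraction. This is not merely a bookkeeping obstacle to be handled by ``careful accounting'' --- controlling it uniformly would essentially require the non-concentration estimate you are trying to prove, so the iteration has no unconditional base case.

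The paper avoids iteration and the Frostman viewpoint entirely, and works directly with $\psi\mu$ (no reduction to $\psi=\mathrm{id}$). It decomposes $\psi\mu=\sum_{u\in\Psi_{n+m}}p_u\,\psi\varphi_u\mu$ at a scale \emph{finer} than $\delta r$, namely with pieces of diameter $\asymp 2^{-m}r$ for a large fixed $m$. Two observations then do all the work: (a) nonatomicity of $\mu$ plus compactness gives an $\eta>0$ with $\mu(J)<\epsilon$ for every interval $J$ of length $\le\eta$, and since $(\psi\circ\varphi_u)^{-1}\bigl(B(x,\delta r)\bigr)$ has length $O_{\epsilon,C}(\delta 2^m)\le\eta$, \emph{each} piece satisfies $\psi\varphi_u\mu\bigl(B(x,\delta r)\bigr)<\epsilon$; (b) any piece with $\psi\varphi_u\mu\bigl(B(x,\delta r)\bigr)>0$ has diameter $\ll r$ and meets $B(x,\delta r)$, hence $\psi\circ\varphi_u(I)\subset B(x,r)$, so that piece contributes \emph{all} of its weight $p_u$ to $\psi\mu\bigl(B(x,r)\bigr)$. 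Summing gives the lemma in one shot. The crucial point you are missing is that the nonatomicity bound is applied to \emph{every} relevant piece simultaneously, not just the one through $y$; this is what makes the comparison to $\mu\bigl(B(y,s)\bigr)$ work without any a priori Frostman or doubling information.
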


\begin{proof}
Let $C>1$ be a large global constant depending only on $\Phi$, and
let $0<\epsilon,\eta,\delta<1$ and $m\in\mathbb{Z}_{>0}$ be with
\begin{equation}
C,\epsilon^{-1}\ll\eta^{-1}\ll m\ll\delta^{-1}.\label{eq:rel between prams in cont lemma}
\end{equation}
Since $\mu$ is nonatomic (see Section \ref{subsec:The-setup}) and
$\epsilon^{-1}\ll\eta^{-1}$, we may assume that
\begin{equation}
\mu(J)<\epsilon\text{ for each interval }J\subset\mathbb{R}\text{ of length at most }\eta.\label{eq:mu(J)<epsilon}
\end{equation}

Let $\psi\in C^{1}(I)$ be with $\epsilon\le\left|\psi'(y)\right|\le\epsilon^{-1}$
for $y\in I$, fix $x\in\mathbb{R}$ and $0<r<1$, and set $n:=\left\lfloor -\log r\right\rfloor $.
Let $u\in\Psi_{n+m}$ be given, where $\Psi_{n+m}$ is defined in
Section \ref{subsec:Symbolic-notations}. By (\ref{eq:der for u in Psi_n})
and the choice of $n$,
\begin{equation}
\epsilon C^{-1}2^{-m}r\le\left|\left(\psi\circ\varphi_{u}\right)'(y)\right|\le\epsilon^{-1}C2^{-m}r\text{ for }y\in I.\label{eq:lb ub on der in cont lem}
\end{equation}
Thus, $\left(\psi\circ\varphi_{u}\right)^{-1}\left(B(x,\delta r)\right)$
is an interval of length at most $2\delta\epsilon^{-1}C2^{m}$. From
this, (\ref{eq:rel between prams in cont lemma}) and (\ref{eq:mu(J)<epsilon}),
\begin{equation}
\psi\varphi_{u}\mu\left(B(x,\delta r)\right)<\epsilon\text{ for }u\in\Psi_{n+m}.\label{eq:ub on psi varph mu (B(x,delta r))}
\end{equation}

Next, let $u\in\Psi_{n+m}$ be with $\psi\varphi_{u}\mu\left(B(x,\delta r)\right)>0$.
Then there exists $z\in\psi\circ\varphi_{u}(I)$ with $|z-x|\le\delta r$.
Additionally, from (\ref{eq:lb ub on der in cont lem}) it follows
that $\mathrm{diam}\left(\psi\circ\varphi_{u}(I)\right)\le\epsilon^{-1}C2^{-m}r$.
Hence, for $y\in\psi\circ\varphi_{u}(I)$,
\[
|y-x|\le\epsilon^{-1}C2^{-m}r+\delta r<r,
\]
where the last inequality follows from (\ref{eq:rel between prams in cont lemma}).
This implies that $\psi\circ\varphi_{u}(I)\subset B(x,r)$, and so
\[
\psi\varphi_{u}\mu\left(B(x,r)\right)=1\text{ for }u\in\Psi_{n+m}\text{ with }\psi\varphi_{u}\mu\left(B(x,\delta r)\right)>0.
\]
Thus, from (\ref{eq:mu=00003DSum_=00007Bu in Psi_n=00007D}) and (\ref{eq:ub on psi varph mu (B(x,delta r))}),
\begin{multline*}
\psi\mu\left(B(x,\delta r)\right)\le\epsilon\sum_{u\in\Psi_{n+m}}p_{u}\cdot1_{\left\{ \psi\varphi_{u}\mu\left(B(x,\delta r)\right)>0\right\} }\\
\le\epsilon\sum_{u\in\Psi_{n+m}}p_{u}\cdot\psi\varphi_{u}\mu\left(B(x,r)\right)=\epsilon\cdot\psi\mu\left(B(x,r)\right),
\end{multline*}
which completes the proof of the lemma.
\end{proof}
Given $\emptyset\ne E\subset\mathbb{R}$ and $\delta>0$, write $E^{(\delta)}:=\left\{ x\in\mathbb{R}\::\:d\left(x,E\right)\le\delta\right\} $
for the closed $\delta$-neighbourhood of $E$. The following corollary
follows directly from Lemma \ref{lem:uni cont ac scale}.
\begin{cor}
\label{cor:from uni cont}For every $0<\epsilon<1$ there exists $\delta>0$
such that the following holds. Let $\psi\in C^{1}(I)$ be with $\epsilon\le\left|\psi'(x)\right|\le\epsilon^{-1}$
for $x\in I$. Then,
\[
\psi\mu\left(\cup_{D\in\mathcal{D}_{n}^{1}}\left(\partial D\right)^{\left(2^{-n}\delta\right)}\right)<\epsilon\text{ for each }n\ge0.
\]
\end{cor}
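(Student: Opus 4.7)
The plan is to deduce this directly from Lemma \ref{lem:uni cont ac scale} via a simple covering argument. First I observe that
\[
\bigcup_{D\in\mathcal{D}_{n}^{1}}(\partial D)^{(2^{-n}\delta)}=\bigcup_{m\in\mathbb{Z}}B\!\left(m/2^{n},\,2^{-n}\delta\right),
\]
since the boundaries of the level-$n$ dyadic intervals are exactly the grid points $\{m/2^{n}:m\in\mathbb{Z}\}$. Thus it suffices to bound the $\psi\mu$-measure of this union of balls.

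Given $\epsilon$, I would apply Lemma \ref{lem:uni cont ac scale} with a parameter $\epsilon':=\epsilon/4$ to obtain $\delta_{0}>0$ with the stated property (uniformly in $\psi$), and set $\delta:=\delta_{0}/2$. For $n\ge1$ I apply the lemma with $x=m/2^{n}$ and $r=2^{-n}\in(0,1)$ to obtain
\[
\psi\mu\!\left(B(m/2^{n},2^{-n}\delta)\right)\le\psi\mu\!\left(B(m/2^{n},2^{-n}\delta_{0})\right)\le\epsilon'\,\psi\mu\!\left(B(m/2^{n},2^{-n})\right).
\]
For $n=0$ I apply the lemma instead with $r=1/2$, which gives
\[
\psi\mu\!\left(B(m,\delta)\right)=\psi\mu\!\left(B(m,\delta_{0}/2)\right)\le\epsilon'\,\psi\mu\!\left(B(m,1/2)\right).
\]
Both choices of $r$ are strictly less than $1$, so the lemma's hypothesis is satisfied.

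Finally I sum over $m\in\mathbb{Z}$. In each case, the covering balls $\{B(m/2^{n},2^{-n})\}_{m}$ (respectively $\{B(m,1/2)\}_{m}$) cover $\mathbb{R}$ with multiplicity at most $2$, so their $\psi\mu$-masses sum to at most $2$. This yields
\[
\psi\mu\!\left(\bigcup_{D\in\mathcal{D}_{n}^{1}}(\partial D)^{(2^{-n}\delta)}\right)\le\sum_{m\in\mathbb{Z}}\psi\mu\!\left(B(m/2^{n},2^{-n}\delta)\right)\le 2\epsilon'=\epsilon/2<\epsilon,
\]
uniformly in $n\ge0$ and in $\psi$. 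No serious obstacle arises here: all the analytic content (the uniform decay of ball mass upon shrinking by a fixed factor, and its uniformity across $\psi$) is absorbed into Lemma \ref{lem:uni cont ac scale}, while the present argument is purely combinatorial bookkeeping about dyadic grids.
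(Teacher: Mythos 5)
Your argument is correct and is exactly the routine covering deduction that the paper has in mind when it states the corollary ``follows directly'' from Lemma \ref{lem:uni cont ac scale}; the paper offers no written proof. One small remark: the closed balls $B(m/2^{n},2^{-n})$ for $n\ge1$ actually have multiplicity $3$ at the grid points $k/2^{n}$, but since $\mu$ (hence $\psi\mu$) is nonatomic this does not affect the bound $\sum_{m}\psi\mu\left(B(m/2^{n},2^{-n})\right)\le 2$, and in any case taking $\epsilon'=\epsilon/4$ absorbs a multiplicity of $3$ as well.
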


Most of the proof of Proposition \ref{prop:uni ent dim} is contained
in the proof of the following proposition.
\begin{prop}
\label{prop:bef uni ent dim prop}For each $0<\epsilon<1$, $m\ge M(\epsilon)\ge1$
and $n\ge1$ the following holds. Let $\psi\in C^{1}(I)$ be with
$\epsilon\le\left|\psi'(x)\right|\le\epsilon^{-1}$ for $x\in I$.
Then, setting $\theta:=\psi\mu$,
\[
\mathbb{P}\left(\frac{1}{m}H\left(\theta_{x,n},\mathcal{D}_{n+m}\right)>\dim\mu-\epsilon\right)>1-\epsilon.
\]
\end{prop}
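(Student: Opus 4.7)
The plan is to exploit the self-conformal decomposition (\ref{eq:mu=00003DSum_=00007Bu in Psi_n=00007D}) at a scale $\ell$ slightly deeper than $n$, so that each atom of the decomposition fits inside a single level-$n$ dyadic cell, and then to realise each level-$n$ component of $\theta$ as (essentially) a convex combination of bi-Lipschitz images of $\mu$ with constants depending only on $\epsilon$. Given this, Lemma \ref{lem:dyad ent =000026 lip func}, the exact dimensionality (\ref{eq:ent dim =00003D exact dim}), and concavity of entropy will then deliver the required lower bound on the component entropy.

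First I would fix $\epsilon$, let $\delta=\delta(\epsilon)$ be supplied by Corollary \ref{cor:from uni cont}, and use (\ref{eq:der for u in Psi_n}) together with the bound $\epsilon\le|\psi'|\le\epsilon^{-1}$ to select an integer $K=K(\epsilon)$ such that, setting $\ell:=n+K$, every $u\in\Psi_\ell$ satisfies $\mathrm{diam}\bigl(\psi\circ\varphi_u(I)\bigr)\le\delta\,2^{-n}$. Writing $\theta=\sum_{u\in\Psi_\ell}p_u\,\psi\varphi_u\mu$ via (\ref{eq:mu=00003DSum_=00007Bu in Psi_n=00007D}), I would split $\Psi_\ell=U_g\sqcup U_b$, where $u\in U_g$ iff $\psi\circ\varphi_u(I)$ lies in a single level-$n$ dyadic cell. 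For each $u\in U_b$ the interval $\psi\circ\varphi_u(I)$ must meet $\partial\mathcal{D}_n^1$ and hence lies in its $\delta\,2^{-n}$-neighbourhood, so Corollary \ref{cor:from uni cont} yields $\sum_{u\in U_b}p_u<\epsilon$.

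For a level-$n$ cell $D$, I would set $U_D:=\{u\in U_g:\psi\circ\varphi_u(I)\subset D\}$ and $\mathrm{bad}(D):=\sum_{u\in U_b}p_u\,\psi\varphi_u\mu(D)$, so that $\theta(D)=\sum_{u\in U_D}p_u+\mathrm{bad}(D)$ and $\sum_D\mathrm{bad}(D)\le\sum_{u\in U_b}p_u<\epsilon$. A Markov-type inequality then shows that, outside a family of cells of total $\theta$-measure at most $\sqrt{\epsilon}$, every remaining $D$ satisfies $\mathrm{bad}(D)\le\sqrt{\epsilon}\,\theta(D)$. On such \emph{good} cells the component $\theta_{x,n}$ is a convex combination, with weights $(1-\sqrt\epsilon)$ and $\sqrt\epsilon$, of the normalised mixture of $\{\psi\varphi_u\mu\}_{u\in U_D}$ and an auxiliary probability measure, so concavity of entropy gives
\[
H\!\left(\theta_{x,n},\mathcal{D}_{n+m}\right)\ge(1-\sqrt\epsilon)\sum_{u\in U_D}\frac{p_u}{\sum_{v\in U_D}p_v}\,H\!\left(\psi\varphi_u\mu,\mathcal{D}_{n+m}\right).
\]

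Finally, each summand is handled as follows: bounded distortion together with the bounds on $\psi'$ makes $|(\psi\circ\varphi_u)'|$ comparable to $2^{-\ell}$ up to constants depending only on $\epsilon$, so the rescaled map $2^\ell(\psi\circ\varphi_u-\psi\circ\varphi_u(0))$ is bi-Lipschitz on $I$ with constant depending only on $\epsilon$. Lemma \ref{lem:dyad ent =000026 lip func} then gives $H(\psi\varphi_u\mu,\mathcal{D}_{n+m})=H(\mu,\mathcal{D}_{m-K})+O_\epsilon(1)$, and by (\ref{eq:ent dim =00003D exact dim}) this is at least $m\dim\mu-\epsilon m$ as soon as $m\ge M(\epsilon)\gg K$. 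Replacing $\epsilon$ by a small power of itself at the outset yields the stated probability bound. The main obstacle I foresee is precisely the uniform-in-$n$ accounting of the boundary interaction between the multiscale decomposition via $\Psi_\ell$ and the ambient dyadic grid; this is exactly what Lemma \ref{lem:uni cont ac scale} and Corollary \ref{cor:from uni cont} are engineered to supply, but the dependence of $\delta$ (and hence $K$) on $\epsilon$ must be tracked carefully so that the cumulative losses do not exceed the target $\epsilon$.
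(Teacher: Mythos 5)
Your proposal is correct and follows essentially the same route as the paper's proof: decompose $\theta$ via the cut-set $\Psi_{n+k}$ at a scale slightly deeper than $n$ (with the offset $k$, your $K$, determined by $\epsilon$ and the $\delta$ from Corollary \ref{cor:from uni cont}), discard the words whose images meet the $\delta 2^{-n}$-neighbourhood of the dyadic boundary, run a Markov argument to show most level-$n$ cells are dominated by good words, and then handle each $\psi\varphi_u\mu$ via bi-Lipschitz rescaling, Lemma \ref{lem:dyad ent =000026 lip func}, exact dimensionality, and concavity. The only cosmetic difference is the bookkeeping constant (you split with $\sqrt\epsilon$ thresholds; the paper uses $\epsilon^2$ against $\epsilon$), which is absorbed by your final rescaling of $\epsilon$ exactly as in the paper.
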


\begin{proof}
Let $C>1$ be a large global constant depending only on $\Phi$, and
let $0<\epsilon,\delta<1$ and $k,m,n\in\mathbb{Z}_{>0}$ be with
\[
C,\epsilon^{-1}\ll\delta^{-1}\ll k\ll m.
\]
Let $\psi\in C^{1}(I)$ be such that $\epsilon\le\left|\psi'(x)\right|\le\epsilon^{-1}$
for $x\in I$.

Set $B:=\cup_{D\in\mathcal{D}_{n}^{1}}\left(\partial D\right)^{\left(2^{-n}\delta\right)}$.
By Corollary \ref{cor:from uni cont} and since $\epsilon^{-1}\ll\delta^{-1}$,
we have $\psi\mu(B)<\epsilon^{2}$. Let $\mathcal{U}$ be the set
of words $u\in\Psi_{n+k}$ such that $\psi\circ\varphi_{u}(I)\subset D$
for some $D\in\mathcal{D}_{n}^{1}$. By (\ref{eq:der for u in Psi_n}),
\[
\mathrm{diam}\left(\psi\circ\varphi_{u}(I)\right)\le C\epsilon^{-1}2^{-n-k}\text{ for }u\in\Psi_{n+k}.
\]
From this and since $C,\epsilon^{-1},\delta^{-1}\ll k$, it follows
that $\psi\circ\varphi_{u}(I)\subset B$ for $u\in\Psi_{n+k}\setminus\mathcal{U}$.
Thus, by (\ref{eq:mu=00003DSum_=00007Bu in Psi_n=00007D}),
\[
\sum_{u\in\Psi_{n+k}\setminus\mathcal{U}}p_{u}\le\sum_{u\in\Psi_{n+k}}p_{u}\cdot\psi\varphi_{u}\mu(B)=\psi\mu(B)<\epsilon^{2},
\]
which implies,
\begin{equation}
\epsilon^{2}>\sum_{D\in\mathcal{D}_{n}^{1}}\psi\mu(D)\sum_{u\in\Psi_{n+k}\setminus\mathcal{U}}p_{u}\cdot\frac{\psi\varphi_{u}\mu(D)}{\psi\mu(D)}.\label{eq:eta^2>}
\end{equation}

Let $\mathcal{E}$ be the set of $D\in\mathcal{D}_{n}^{1}$ such that
$\psi\mu(D)>0$ and
\[
\frac{1}{\psi\mu(D)}\sum_{u\in\Psi_{n+k}\setminus\mathcal{U}}p_{u}\cdot\psi\varphi_{u}\mu(D)<\epsilon.
\]
By (\ref{eq:eta^2>}), we have $\psi\mu\left(\cup\mathcal{E}\right)>1-\epsilon$.

Fix $D\in\mathcal{E}$ and let $\mathcal{U}_{D}$ be the set of $u\in\Psi_{n+k}$
such that $\psi\circ\varphi_{u}(I)\subset D$. By (\ref{eq:mu=00003DSum_=00007Bu in Psi_n=00007D}),
\begin{equation}
\left(\psi\mu\right)_{D}=\frac{1}{\psi\mu(D)}\sum_{u\in\Psi_{n+k}}p_{u}\cdot\psi\varphi_{u}\mu(D)\cdot\left(\psi\varphi_{u}\mu\right)_{D}.\label{eq:(psi mu)_D =00003D}
\end{equation}
Additionally, from $D\in\mathcal{E}$ and since $\psi\varphi_{u}\mu(D)=0$
for $u\in\mathcal{U}\setminus\mathcal{U}_{D}$,
\begin{equation}
\frac{1}{\psi\mu(D)}\sum_{u\in\mathcal{U}_{D}}p_{u}\cdot\psi\varphi_{u}\mu(D)>1-\epsilon.\label{eq:weights in U_D >}
\end{equation}

Given $u\in\mathcal{U}_{D}$, we have $\left(\psi\varphi_{u}\mu\right)_{D}=\psi\varphi_{u}\mu$.
Hence,
\begin{equation}
\frac{1}{m}H\left(\left(\psi\varphi_{u}\mu\right)_{D},\mathcal{D}_{n+m}\right)=\frac{1}{m}H\left(S_{2^{n}}\psi\varphi_{u}\mu,\mathcal{D}_{m}\right),\label{eq:ent of (psi varph mu)_D}
\end{equation}
where recall that $S_{2^{n}}(y)=2^{n}y$ for $y\in\mathbb{R}$. Since
$u\in\Psi_{n+k}$ and by (\ref{eq:der for u in Psi_n}),
\[
\epsilon C^{-1}2^{-k}\le\left|\left(S_{2^{n}}\psi\varphi_{u}\right)'(x)\right|\le\epsilon^{-1}C2^{-k}\text{ for }x\in I.
\]
From this, (\ref{eq:ent of (psi varph mu)_D}), Lemma \ref{lem:dyad ent =000026 lip func},
(\ref{eq:ent dim =00003D exact dim}), and $\epsilon^{-1},C,k,\ll m$,
\[
\frac{1}{m}H\left(\left(\psi\varphi_{u}\mu\right)_{D},\mathcal{D}_{n+m}\right)\ge\dim\mu-\epsilon\text{ for }u\in\mathcal{U}_{D}.
\]
Thus, by concavity of entropy, (\ref{eq:(psi mu)_D =00003D}), and
(\ref{eq:weights in U_D >}),
\[
\frac{1}{m}H\left(\left(\psi\mu\right)_{D},\mathcal{D}_{n+m}\right)\ge(1-\epsilon)\left(\dim\mu-\epsilon\right)>\dim\mu-2\epsilon\text{ for }D\in\mathcal{E}.
\]
Since $\psi\mu\left(\cup\mathcal{E}\right)>1-\epsilon$, this completes
the proof of the proposition.
\end{proof}
\begin{proof}[Proof of Proposition \ref{prop:uni ent dim}]
Let $0<\epsilon<1$ and $m,n\in\mathbb{Z}_{>0}$ be with $\epsilon^{-1}\ll m\ll n$.
Let $\psi\in C^{1}(I)$ be with $\epsilon\le\left|\psi'(x)\right|\le\epsilon^{-1}$
for $x\in I$, and set $\theta:=\psi\mu$. By Proposition \ref{prop:bef uni ent dim prop},
\[
\mathbb{P}_{1\le i\le n}\left(\frac{1}{m}H\left(\theta_{x,i},\mathcal{D}_{i+m}\right)>\dim\mu-\epsilon\right)>1-\epsilon.
\]
Moreover, by \cite[Lemma 3.4]{Ho1}, Lemma \ref{lem:dyad ent =000026 lip func}, and (\ref{eq:ent dim =00003D exact dim}),
\[
\mathbb{E}_{1\le i\le n}\left(\frac{1}{m}H\left(\theta_{x,i},\mathcal{D}_{i+m}\right)\right)=\dim\mu+O(\epsilon).
\]
These two facts complete the proof of the proposition (by starting
from a smaller $\epsilon$).
\end{proof}
In the proof of Theorem \ref{thm:ent inc with poly}, we shall need
the following corollary, which follows directly from Proposition \ref{prop:uni ent dim}
and \cite[Lemma 2.7]{Ho}. Recall the notation $\mathcal{N}_{n}$
and $\lambda_{n}$ introduced in Section \ref{subsec:Basic-notations}.
\begin{cor}
\label{cor:uni ent dim comp of comp}For each $0<\epsilon<1$, $m\ge1$,
$l\ge L(\epsilon)\ge1$ and $n\ge N(\epsilon,m,l)\ge1$ the following
holds. Let $\psi\in C^{1}(I)$ be with $\epsilon\le\left|\psi'(x)\right|\le\epsilon^{-1}$
for $x\in I$, and set $\theta:=\psi\mu$. Then $\lambda_{n}\times\theta(E)>1-\epsilon$,
where $E$ is the set of all $(i,x)\in\mathcal{N}_{n}\times\mathbb{R}$
such that
\[
\mathbb{P}_{i\le j\le i+m}\left\{ \left|\frac{1}{l}H\left(\left(\theta_{x,i}\right)_{y,j},\mathcal{D}_{j+l}\right)-\dim\mu\right|<\epsilon\right\} >1-\epsilon.
\]
\end{cor}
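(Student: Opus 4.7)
The plan is to apply Proposition \ref{prop:uni ent dim} at scale $l$ to $\theta=\psi\mu$, and then to transfer the resulting uniformity from single-level components to components-of-components using \cite[Lemma 2.7]{Ho}. Throughout, assume parameters obey the relations $\epsilon^{-1}\ll l$ and $l,m,\epsilon^{-1}\ll n$, matching the quantifier dependencies in the corollary.

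First I would invoke Proposition \ref{prop:uni ent dim} with $l$ in place of $m$, $n+m$ in place of $n$, and with a smaller error parameter $\epsilon'\ll\epsilon$ (a specific polynomial expression in $\epsilon$ and $1/(m+1)$ suffices). Since $\psi$ satisfies the hypotheses of that proposition and the chosen parameters satisfy the required $\ll$-relations, the conclusion reads
\[
\mathbb{P}_{1 \le j \le n+m}\Big(\big|\tfrac{1}{l}H(\theta_{y,j},\mathcal{D}_{j+l}) - \dim\mu\big| < \epsilon'\Big) > 1 - \epsilon',
\]
where $y\sim\theta$ is the implicit random draw. Next, \cite[Lemma 2.7]{Ho} provides the sampling equivalence: the random pair $(j,(\theta_{x,i})_{y,j})$, generated by drawing $i$ uniformly in $\{1,\ldots,n\}$, $x\sim\theta$, $j$ uniformly in $\{i,\ldots,i+m\}$, and $y\sim\theta_{x,i}$, has distribution within total variation $O(m/n)$ of the pair $(j,\theta_{y,j})$ obtained by picking $j$ uniformly in $\{1,\ldots,n+m\}$ and $y\sim\theta$. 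Evaluating the bounded indicator of the entropy deviation event under both samplings, and using $m/n\ll\epsilon'$, yields
\[
\mathbb{E}_{1\le i\le n}\mathbb{E}_{x\sim\theta}\Big[\mathbb{P}_{i \le j \le i+m}\big(\big|\tfrac{1}{l}H((\theta_{x,i})_{y,j},\mathcal{D}_{j+l}) - \dim\mu\big| \ge \epsilon\big)\Big] < \epsilon^{2},
\]
after adjusting constants. A Markov inequality applied to the outer $\lambda_n\times\theta$ expectation then gives $\lambda_n\times\theta(E)>1-\epsilon$, where $E$ is the set from the corollary.

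The only mildly delicate point is parameter bookkeeping: one must verify that $L(\epsilon)$ and $N(\epsilon,m,l)$ can be selected so that Proposition \ref{prop:uni ent dim} applies at scale $l$ with error $\epsilon'$ and so that the $O(m/n)$ total variation error from \cite[Lemma 2.7]{Ho} is negligible relative to $\epsilon^{2}$. Given the $\ll$-conventions of the paper, no other obstacle arises.
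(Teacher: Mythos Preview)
Your proposal is correct and matches the paper's approach: the paper states that the corollary follows directly from Proposition~\ref{prop:uni ent dim} together with \cite[Lemma 2.7]{Ho}, and you have spelled out precisely this derivation (apply the proposition at scale $l$ with a smaller error, use the component-of-component sampling lemma to transfer, then conclude by Markov). The parameter bookkeeping you flag is the only thing to check, and your handling of it is adequate under the paper's $\ll$-conventions.
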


\subsection{\label{subsec:add prep for ent inc}Additional preparations for the
proof of Theorem \ref{thm:ent inc with poly}}

The proof of the following lemma is similar to the proof of \cite[Lemma 6.9]{HR}
and is therefore omitted.
\begin{lem}
\label{lem:step1 in ent inc pf}Let $k\in\mathbb{Z}_{>0}$, $R>1$,
$\nu\in\mathcal{M}(\mathcal{P}_{k})$ and $\theta\in\mathcal{M}(I)$
be given. Suppose that $\Vert p\Vert_{2}\le R$ for $p\in\mathrm{supp}(\nu)$.
Then for every $n\ge m\ge1,$
\[
\frac{1}{n}H\left(\nu.\theta,\mathcal{D}_{n}\right)\ge\mathbb{E}_{1\le i\le n}\left(\frac{1}{m}H\left(\nu_{p,i}.\theta_{x,i},\mathcal{D}_{i+m}\right)\right)-O_{k,R}\left(\frac{1}{m}+\frac{m}{n}\right).
\]
\end{lem}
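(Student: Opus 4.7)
The plan is to combine three standard ingredients: a Fubini-type identity converting global dyadic entropy into an average of conditional multi-scale entropies, concavity of conditional entropy applied to the product decomposition $\nu\times\theta=\mathbb{E}[\nu_{p,i}\times\theta_{x,i}]$, and a Lipschitz estimate on the evaluation map $(p,x)\mapsto p(x)$ that confines each image component $\nu_{p,i}.\theta_{x,i}$ to an interval of diameter $O_{k,R}(2^{-i})$.

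First I would record the standard identity that for any $\eta\in\mathcal{M}(\mathbb{R})$ of bounded support and $n\ge m\ge 1$,
\[
\tfrac{1}{n}H(\eta,\mathcal{D}_n) \;=\; \mathbb{E}_{1\le i\le n}\Big[\tfrac{1}{m}H(\eta,\mathcal{D}_{i+m}\mid\mathcal{D}_i)\Big] + O(m/n).
\]
This follows from the chain rule $H(\eta,\mathcal{D}_{i+m}\mid\mathcal{D}_i)=\sum_{j=i}^{i+m-1}H(\eta,\mathcal{D}_{j+1}\mid\mathcal{D}_j)$ by interchanging the order of summation; the boundary terms are $O(m^2)$ in absolute value because each increment is bounded by $\log 2=1$, producing the stated $O(m/n)$ error after dividing by $nm$.

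Second, since $\nu\times\theta$ is a product measure on $\mathcal{P}_k\times I$, its conditional on the atom of $(p,x)$ in $\mathcal{D}_i^{\mathcal{P}_k}\times\mathcal{D}_i^1$ is precisely $\nu_{p,i}\times\theta_{x,i}$. Pushing forward by the evaluation map $\mathrm{ev}(p,x)=p(x)$ gives $\nu.\theta=\mathbb{E}_{(p,x)\sim\nu\times\theta}[\nu_{p,i}.\theta_{x,i}]$, so concavity of conditional entropy yields
\[
H(\nu.\theta,\mathcal{D}_{i+m}\mid\mathcal{D}_i) \;\ge\; \mathbb{E}_{(p,x)}\big[H(\nu_{p,i}.\theta_{x,i},\mathcal{D}_{i+m}\mid\mathcal{D}_i)\big].
\]

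The final step is the Lipschitz estimate on $\mathrm{ev}$. For any $p\in\mathrm{supp}(\nu)$ the bound $\|p\|_2\le R$ forces $\|p'\|_\infty\le k\sqrt{k+1}\,R$, and for $q\in\mathcal{D}_i^{\mathcal{P}_k}(p)$ and $y\in\mathcal{D}_i^1(x)\cap I$ one obtains
\[
|q(y)-p(x)| \le \|q-p\|_2\cdot\sqrt{k+1} + \|p'\|_\infty\cdot|y-x| = O_{k,R}(2^{-i}).
\]
Hence $\nu_{p,i}.\theta_{x,i}$ is supported in an interval meeting only $O_{k,R}(1)$ atoms of $\mathcal{D}_i^1$, so $H(\nu_{p,i}.\theta_{x,i},\mathcal{D}_{i+m})=H(\nu_{p,i}.\theta_{x,i},\mathcal{D}_{i+m}\mid\mathcal{D}_i)+O_{k,R}(1)$. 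Dividing by $m$, averaging over $i$, and chaining the three displays produces the claim, with the $m/n$ error from the chain-rule step and the $1/m$ error from removing the conditioning on $\mathcal{D}_i$. The only delicate point — and the main place to be careful — is tracking how the $k$- and $R$-dependence in the evaluation Lipschitz constant enters the support bound for $\nu_{p,i}.\theta_{x,i}$; everything else is a direct application of concavity and the chain rule.
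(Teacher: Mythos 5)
Your proof is correct and follows the standard Hochman-style argument that the paper intends (the paper omits the proof, pointing to \cite[Lemma 6.9]{HR}, whose proof has exactly this three-step structure: the chain-rule/Fubini averaging over scales, decomposition of the product measure into level-$i$ components followed by concavity of conditional entropy, and a diameter bound on the pushed-forward components via a Lipschitz estimate on the evaluation map). The only minor imprecision is that the $O(m/n)$ constant in your first display already depends on $k$ and $R$ through $H(\nu.\theta,\mathcal{D}_0)$ and hence should be $O_{k,R}(m/n)$; this is harmless since it is absorbed into the final error term.
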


Writing $F$ for the map taking $(p,x)\in\mathcal{P}_{k}\times I$
to $p(x)$, the differential of $F$ at a point $(p,x)$ is given
by $dF_{(p,x)}(q,y)=q(x)+p'(x)y$. Using this, the proof of the following
linearization lemma is almost identical to the proof of \cite[Lemma 4.2]{BHR}
and is therefore omitted.
\begin{lem}
\label{lem:linearization}For every $\epsilon>0$, $k\in\mathbb{Z}_{>0}$,
$R>1$, $m\ge M(\epsilon)\ge1$, and $0<\delta<\delta(\epsilon,k,R,m)$,
the following holds. Let $p\in\mathcal{P}_{k}$, $x\in I$, $\nu\in\mathcal{M}(\mathcal{P}_{k})$
and $\theta\in\mathcal{M}(I)$ be such that $\Vert p\Vert_{2}\le R$,
$\Vert q-p\Vert_{2}\le\delta$ for $q\in\mathrm{supp}(\nu)$, and
$|y-x|\le\delta$ for $y\in\mathrm{supp}(\theta)$. Then,
\[
\left|\frac{1}{m}H\left(\nu.\theta,\mathcal{D}_{m-\log\delta}\right)-\frac{1}{m}H\left(\left(\nu.x\right)*\left(S_{p'(x)}\theta\right),\mathcal{D}_{m-\log\delta}\right)\right|<\epsilon.
\]
\end{lem}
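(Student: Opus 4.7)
The plan is to linearize the evaluation map $F:(q,y)\mapsto q(y)$ around the reference point $(p,x)$, show that on the (small) supports of $\nu$ and $\theta$ the error between $F$ and its first-order approximation is negligible at the scale $m-\log\delta$, and then recognize the image of the linearization as the desired convolution, up to an additive constant that costs only $O(1)$ in entropy.

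First I would introduce the linear map $L:\mathcal{P}_{k}\times I\to\mathbb{R}$ defined by $L(q,y)=q(x)+p'(x)(y-x)$, so that $L$ is the differential of $F$ at $(p,x)$ applied to $(q-p,y-x)$ plus the constant $p(x)$, rewritten. Using that all norms on $\mathcal{P}_{k}$ are equivalent, I would fix a constant $c_{k}$ such that $\Vert q'\Vert_{\infty},\Vert q''\Vert_{\infty}\le c_{k}\Vert q\Vert_{2}$ and $\Vert q'-p'\Vert_{\infty}\le c_{k}\Vert q-p\Vert_{2}$. A Taylor expansion then gives, for $q\in\mathrm{supp}(\nu)$ and $y\in\mathrm{supp}(\theta)$,
\[
\bigl|F(q,y)-L(q,y)\bigr|\le\bigl|q'(x)-p'(x)\bigr|\cdot|y-x|+\tfrac{1}{2}\Vert q''\Vert_{\infty}(y-x)^{2}\le C_{k,R}\,\delta^{2},
\]
for some constant $C_{k,R}$ depending only on $k$ and $R$. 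Choosing $\delta(\epsilon,k,R,m)\le C_{k,R}^{-1}2^{-m}$, this bound is at most $\delta 2^{-m}=2^{-(m-\log\delta)}$, so Lemma \ref{lem:cont of ent wrt close maps} applied to $F$ and $L$ (viewed as functions on $\mathrm{supp}(\nu\times\theta)$ with respect to the measure $\nu\times\theta$) yields
\[
H\bigl(F(\nu\times\theta),\mathcal{D}_{m-\log\delta}\bigr)=H\bigl(L(\nu\times\theta),\mathcal{D}_{m-\log\delta}\bigr)+O(1).
\]

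Next, writing $L(q,y)=\bigl(q(x)+p'(x)y\bigr)-p'(x)x$, I would note that the map $G(q,y):=q(x)+p'(x)y$ pushes $\nu\times\theta$ exactly to $(\nu.x)*(S_{p'(x)}\theta)$, since $G$ is the sum of the two independent coordinate functions $q\mapsto q(x)$ and $y\mapsto p'(x)y$. The map $L$ differs from $G$ by the constant translation $T_{-p'(x)x}$, which is a bi-Lipschitz bijection with bi-Lipschitz constant $1$, so by Lemma \ref{lem:dyad ent =000026 lip func},
\[
H\bigl(L(\nu\times\theta),\mathcal{D}_{m-\log\delta}\bigr)=H\bigl((\nu.x)*(S_{p'(x)}\theta),\mathcal{D}_{m-\log\delta}\bigr)+O(1).
\]
Combining the two displays, dividing by $m$, and choosing $m\ge M(\epsilon)$ large enough so that the resulting $O(1/m)$ error is below $\epsilon$ completes the proof, since by definition $F(\nu\times\theta)=\nu.\theta$.

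The main technical point is just aligning constants: one needs $\delta$ small enough in terms of $m$ (and $k,R$) to make the pointwise error $C_{k,R}\delta^{2}$ fall below the dyadic scale $2^{-(m-\log\delta)}=\delta 2^{-m}$, which is why the threshold $\delta(\epsilon,k,R,m)$ has to depend on $m$. Everything else is a routine application of Taylor's theorem together with the entropy-continuity lemmas \ref{lem:dyad ent =000026 lip func} and \ref{lem:cont of ent wrt close maps}; there is no substantive obstacle beyond bookkeeping.
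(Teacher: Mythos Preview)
Your proposal is correct and follows exactly the approach the paper has in mind: the paper itself omits the proof, noting only that the differential of $F(p,x)=p(x)$ is $dF_{(p,x)}(q,y)=q(x)+p'(x)y$ and that the argument is almost identical to \cite[Lemma 4.2]{BHR}. Your Taylor-remainder bound $|F(q,y)-L(q,y)|\le C_{k,R}\delta^{2}\le\delta 2^{-m}$ together with Lemmas~\ref{lem:cont of ent wrt close maps} and~\ref{lem:dyad ent =000026 lip func} is precisely that argument, and the bookkeeping (in particular, that the $O(1)$ constants are absolute so $M$ depends only on $\epsilon$) is handled correctly.
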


The following lemma, which resembles \cite[Lemma 4.5]{BHR}, will
allow us to use the assumption $\frac{1}{n}H\left(\nu,\mathcal{D}_{n}^{\mathcal{P}_{k}}\right)\ge\epsilon$
in the proof of Theorem \ref{thm:ent inc with poly}.
\begin{lem}
\label{lem:ent of nu imp ent of push of comp of nu}For every $0<\epsilon<1$
and $k\in\mathbb{Z}_{>0}$ there exists $\epsilon_{0}=\epsilon_{0}(\epsilon,k)>0$
such that for all $m\ge M(\epsilon,k)\ge1$ and $n\ge N(\epsilon,k,m)\ge1$
the following holds. Let $\nu\in\mathcal{M}(\mathcal{P}_{k})$ be
such that $\Vert p\Vert_{2}\le\epsilon^{-1}$ for $p\in\mathrm{supp}(\nu)$
and $\frac{1}{n}H\left(\nu,\mathcal{D}_{n}\right)\ge\epsilon$. Let
$\psi\in C^{1}(I,I)$ be with $\epsilon\le\left|\psi'(x)\right|\le\epsilon^{-1}$
for $x\in I$, and set $\theta:=\psi\mu$. Then,
\begin{equation}
\int\mathbb{P}_{1\le i\le n}\left\{ \frac{1}{m}H\left(\left(\nu_{p,i}\right).x,\mathcal{D}_{i+m}\right)>\epsilon_{0}\right\} \:d\theta(x)>\epsilon_{0}.\label{eq:ent of nu.x nontrivial}
\end{equation}
\end{lem}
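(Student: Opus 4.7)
My strategy combines the entropy-across-scales identity with a Vandermonde transfer of entropy from $\mathcal{P}_k$ to $\mathbb{R}$, exploiting that $\theta=\psi\mu$ inherits plenty of well-separated support from the nonatomicity of $\mu$ (see Section~\ref{subsec:The-setup}).

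First I would apply the standard identity \cite[Lemma~3.4]{Ho1},
\[
\tfrac{1}{n}H\bigl(\nu,\mathcal{D}_n^{\mathcal{P}_k}\bigr)=\mathbb{E}_{1\le i\le n}\Bigl[\tfrac{1}{m}H\bigl(\nu_{p,i},\mathcal{D}_{i+m}^{\mathcal{P}_k}\bigr)\Bigr]+O_{k}(m/n),
\]
together with the fact that each component $\nu_{p,i}$ sits in a $(k+1)$-dimensional dyadic cell of side $2^{-i}$ (so the conditional entropy in question is bounded by $O_k(1)$). Markov's inequality combined with the hypothesis $\tfrac{1}{n}H(\nu,\mathcal{D}_n)\ge\epsilon$ then yields a constant $\eta_1=\eta_1(\epsilon,k)>0$ and a set $G\subset\mathcal{N}_n\times\mathcal{P}_k$ with $(\lambda_n\times\nu)(G)\ge\eta_1$ on which $\tfrac{1}{m}H(\nu_{p,i},\mathcal{D}_{i+m}^{\mathcal{P}_k})\ge\epsilon/4$, provided $n\gg m\gg 1$.

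Next I would fix once and for all pairwise disjoint closed subintervals $V_0,\ldots,V_k\subset I$ with $\mu(V_j)\ge\eta_0$ and pairwise Euclidean distance at least $\delta_0$, for some constants $\eta_0,\delta_0>0$ depending only on $\Phi,p,k$; this is possible since $\mu$ is nonatomic. Because $\psi$ is monotonic with $|\psi'|\ge\epsilon$, the images $W_j:=\psi(V_j)$ are pairwise disjoint with $\theta(W_j)\ge\eta_0$ and pairwise distance at least $\epsilon\delta_0$. For any $\vec{x}=(x_0,\ldots,x_k)\in W_0\times\cdots\times W_k$, the linear evaluation map $\Pi_{\vec{x}}(q):=(q(x_0),\ldots,q(x_k))$ is given in coordinates by the Vandermonde matrix $(x_j^l)_{j,l}$, whose determinant has absolute value at least $(\epsilon\delta_0)^{\binom{k+1}{2}}$; hence $\Pi_{\vec{x}}:\mathcal{P}_k\to\mathbb{R}^{k+1}$ is a bi-Lipschitz isomorphism with constants depending only on $\epsilon$ and $k$. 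For $(i,p)\in G$ and $m$ large, Lemma~\ref{lem:dyad ent =000026 lip func} gives $\tfrac{1}{m}H(\Pi_{\vec{x}}\nu_{p,i},\mathcal{D}_{i+m}^{k+1})\ge\epsilon/8$, and subadditivity of entropy under coordinate projections yields some $j=j(\vec{x},i,p)\in\{0,\ldots,k\}$ with $\tfrac{1}{m}H(\pi_{x_j}\nu_{p,i},\mathcal{D}_{i+m}^{1})\ge\epsilon/(8(k+1))$, where $\pi_x(q):=q(x)$.

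To finish, I would set $B_j(i,p):=\{x\in W_j:\tfrac{1}{m}H(\pi_x\nu_{p,i},\mathcal{D}_{i+m})\ge\epsilon/(8(k+1))\}$. The previous step gives the covering $W_0\times\cdots\times W_k\subseteq\bigcup_j W_0\times\cdots\times B_j(i,p)\times\cdots\times W_k$, which, after dividing through by $\prod_l\theta(W_l)$, produces $\sum_j\theta(B_j(i,p))/\theta(W_j)\ge 1$, so $\max_j\theta(B_j(i,p))\ge\eta_0/(k+1)$ for every $(i,p)\in G$. Averaging over $(i,p)\in G$ via Fubini yields the desired bound with $\epsilon_0:=\min\bigl(\epsilon/(8(k+1)),\,\eta_0\eta_1/(k+1)\bigr)$. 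The main delicate point is that $\epsilon_0$, $M$, and $N$ must be independent of the particular $\psi$ and $\nu$; this is ensured by choosing $V_0,\ldots,V_k$ from the fixed $\mu$ alone, so that $\psi$ enters the Vandermonde constants only through its uniform derivative bound $\epsilon$.
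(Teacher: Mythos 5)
Your proof is correct, and its spine coincides with the paper's: establish that a positive fraction of $\nu$-components carry nonnegligible entropy in $\mathcal{P}_k$ (via \cite[Lemma 3.4]{Ho1} plus a Markov-type bound), transfer that entropy to $\mathbb{R}^{k+1}$ through the evaluation map $q\mapsto(q(x_0),\dots,q(x_k))$ at $k+1$ well-separated points, and then use subadditivity of entropy under coordinate projections to land on a single scalar entropy $H(\nu_{p,i}.x_j,\mathcal{D}_{i+m})$. The differences are in the bookkeeping. The paper integrates over the product measure $\theta^{\times(k+1)}$ restricted to the set $E$ of $\delta$-separated $(k+1)$-tuples, showing $\theta^{\times(k+1)}(E)\ge 2^{-k}$ from nonatomicity, and obtains the bi-Lipschitz bound for the evaluation operator on $E$ by a compactness argument; you instead fix disjoint intervals $V_0,\dots,V_k$ once and for all from $\mu$ alone and invoke the explicit Vandermonde determinant, which is more elementary and makes the uniformity of $\epsilon_0$ in $\psi$ immediate. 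For the final step, the paper bounds the $\theta$-average $\int\frac{1}{m}H(\nu_{p,i}.x,\mathcal{D}_{i+m})\,d\theta(x)$ from below by Fubini and then converts that expectation into a probability via the uniform upper bound $\frac{1}{m}H(\nu_{p,i}.x,\mathcal{D}_{i+m})\le 2$, which comes from $\mathrm{diam}(\mathrm{supp}(\nu_{p,i}.x))\le(k+1)2^{-i}$; you argue pointwise on $\prod_j W_j$ via pigeonhole plus a covering bound, obtaining the probability estimate directly and sparing that last conversion. Both routes are sound; yours is marginally cleaner at the end. One cosmetic fix: take $\epsilon_0$ strictly below $\min\bigl(\epsilon/(8(k+1)),\,\eta_0\eta_1/(k+1)\bigr)$ so that the strict inequalities in (\ref{eq:ent of nu.x nontrivial}) hold.
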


\begin{proof}
Let $0<\epsilon,\delta<1$, $k,m,n\in\mathbb{Z}_{>0}$ and $C>1$
be with
\[
\epsilon^{-1},k\ll\delta^{-1}\ll C\ll m\ll n.
\]
Let $\nu\in\mathcal{M}(\mathcal{P}_{k})$ be such that $\Vert p\Vert_{2}\le\epsilon^{-1}$
for $p\in\mathrm{supp}(\nu)$ and $\frac{1}{n}H\left(\nu,\mathcal{D}_{n}\right)\ge\epsilon$.
Let $\psi\in C^{1}(I,I)$ be with $\epsilon\le\left|\psi'(x)\right|\le\epsilon^{-1}$
for $x\in I$, and set $\theta:=\psi\mu$.

Note that the norm $\Vert\cdot\Vert_{2}$ on $\mathcal{P}_{k}$ and
the partitions $\mathcal{D}_{l}^{\mathcal{P}_{k}}$ are defined by
identifying $\mathcal{P}_{k}$ with the $(k+1)$-dimensional Euclidean
space $\mathbb{R}^{k+1}$. Thus, by \cite[Lemma 3.4]{Ho1}, since
$\Vert p\Vert_{2}\le\epsilon^{-1}$ for $p\in\mathrm{supp}(\nu)$,
and from $\epsilon^{-1},k,m\ll n$,
\[
\mathbb{E}_{1\le i\le n}\left(\frac{1}{m}H\left(\nu_{p,i},\mathcal{D}_{i+m}\right)\right)\ge\frac{1}{n}H\left(\nu,\mathcal{D}_{n}\right)-\epsilon/2\ge\epsilon/2.
\]
From this, and since $\frac{1}{m}H\left(\nu_{D},\mathcal{D}_{i+m}\right)\le k+1$
for all $i\ge0$ and $D\in\mathcal{D}_{i}^{\mathcal{P}_{k}}$ with
$\nu(D)>0$,
\begin{equation}
\mathbb{P}_{1\le i\le n}\left\{ \frac{1}{m}H\left(\nu_{p,i},\mathcal{D}_{i+m}\right)\ge\frac{\epsilon}{4}\right\} \ge\frac{\epsilon}{4(k+1)}.\label{eq:comp of nu have non neg ent with no neg prob}
\end{equation}

Write $\theta^{\times(k+1)}$ for the $(k+1)$-fold product of $\theta$
with itself. Let $E$ be the set of all $(x_{0},...,x_{k})\in I^{k+1}$
such that $|x_{j}-x_{l}|\ge\delta$ for all $0\le j<l\le k$. Since
$\mu$ is nonatomic and $\epsilon^{-1},k\ll\delta^{-1}$, we may assume
that $\theta\left(B(x,\delta)\right)<1/(2k)$ for all $x\in I$. This
easily implies that $\theta^{\times(k+1)}(E)\ge2^{-k}$.

Given $(x_{0},...,x_{k})=\overline{x}\in I^{k+1}$, let $L_{\overline{x}}:\mathcal{P}_{k}\rightarrow\mathbb{R}^{k+1}$
be the linear operator with $L_{\overline{x}}(p)=\left(p(x_{0}),...,p(x_{k})\right)$
for $p\in\mathcal{P}_{k}$. Note that when $x_{j}\ne x_{l}$ for $0\le j<l\le k$,
it holds that $L_{\overline{x}}$ is an isomorphism of vector spaces.
By compactness and since $\delta^{-1}\ll C$, this implies that
\begin{equation}
C^{-1}\Vert p\Vert_{2}\le\Vert L_{\overline{x}}(p)\Vert_{2}\le C\Vert p\Vert_{2}\:\text{ for all }\overline{x}\in E\text{ and }p\in\mathcal{P}_{k},\label{eq:bd for operators L}
\end{equation}
where we write $\Vert\cdot\Vert_{2}$ also for the $L^{2}$-norm on
$\mathbb{R}^{k+1}$.

Let $i\ge0$ and $D\in\mathcal{D}_{i}^{\mathcal{P}_{k}}$ be with
$\nu(D)>0$ and $\frac{1}{m}H\left(\nu_{D},\mathcal{D}_{i+m}\right)\ge\frac{\epsilon}{4}$.
By (\ref{eq:bd for operators L}), from Lemma \ref{lem:dyad ent =000026 lip func},
from $\epsilon^{-1},k,C\ll m$, and since the norm and dyadic partitions
of $\mathcal{P}_{k}$ are defined by identifying it with $\mathbb{R}^{k+1}$,
\[
\frac{1}{m}H\left(L_{\overline{x}}\nu_{D},\mathcal{D}_{i+m}\right)\ge\frac{1}{m}H\left(\nu_{D},\mathcal{D}_{i+m}\right)-\epsilon/8\ge\epsilon/8\:\text{ for }\overline{x}\in E.
\]
Together with $\theta^{\times(k+1)}(E)\ge2^{-k}$, this gives
\begin{equation}
\int\frac{1}{m}H\left(L_{\overline{x}}\nu_{D},\mathcal{D}_{i+m}\right)\:d\theta^{\times(k+1)}(\overline{x})\ge2^{-k-3}\epsilon.\label{eq:integral >=00003D2^-k-3 epsilon}
\end{equation}

For $0\le j\le k$, let $\pi_{j}:\mathbb{R}^{k+1}\rightarrow\mathbb{R}$
be the projection onto the $j$th coordinate of $\mathbb{R}^{k+1}$.
Given $(x_{0},...,x_{k})=\overline{x}\in I^{k+1}$, note that $\pi_{j}L_{\overline{x}}\nu_{D}=\nu_{D}.x_{j}$
for $0\le j\le k$. Hence, by the conditional entropy formula,
\[
H\left(L_{\overline{x}}\nu_{D},\mathcal{D}_{i+m}\right)\le\sum_{j=0}^{k}H\left(\pi_{j}L_{\overline{x}}\nu_{D},\mathcal{D}_{i+m}\right)=\sum_{j=0}^{k}H\left(\nu_{D}.x_{j},\mathcal{D}_{i+m}\right).
\]
Together with (\ref{eq:integral >=00003D2^-k-3 epsilon}), this gives
\begin{eqnarray*}
2^{-k-3}\epsilon & \le & \sum_{j=0}^{k}\int\frac{1}{m}H\left(\nu_{D}.x_{j},\mathcal{D}_{i+m}\right)\:d\theta^{\times(k+1)}(x_{0},...,x_{k})\\
 & = & (k+1)\int\frac{1}{m}H\left(\nu_{D}.x,\mathcal{D}_{i+m}\right)\:d\theta(x).
\end{eqnarray*}

We have thus shown that for all $i\ge0$ and $D\in\mathcal{D}_{i}^{\mathcal{P}_{k}}$
with $\nu(D)>0$ and $\frac{1}{m}H\left(\nu_{D},\mathcal{D}_{i+m}\right)\ge\frac{\epsilon}{4}$,
\[
\int\frac{1}{m}H\left(\nu_{D}.x,\mathcal{D}_{i+m}\right)\:d\theta(x)\ge\frac{\epsilon}{(k+1)2^{k+3}}.
\]
Together with (\ref{eq:comp of nu have non neg ent with no neg prob}),
this implies
\begin{equation}
\int\mathbb{E}_{1\le i\le n}\left\{ \frac{1}{m}H\left(\left(\nu_{p,i}\right).x,\mathcal{D}_{i+m}\right)\right\} \:d\theta(x)\ge\frac{\epsilon^{2}}{(k+1)^{2}2^{k+5}}.\label{eq:bd of exp of nu.x}
\end{equation}

Given $i\ge0$, $D\in\mathcal{D}_{i}^{\mathcal{P}_{k}}$ with $\nu(D)>0$,
$p_{1},p_{2}\in D$, and $x\in I$, we have $\left|p_{1}(x)-p_{2}(x)\right|\le(k+1)2^{-i}$.
Hence $\mathrm{diam}\left(\left(\nu_{D}\right).x\right)\le(k+1)2^{-i}$,
which implies
\[
\frac{1}{m}H\left(\left(\nu_{D}\right).x,\mathcal{D}_{i+m}\right)\le1+\frac{1}{m}\log(k+2)\le2.
\]
Setting $\epsilon_{0}:=\epsilon^{2}(k+1)^{-2}2^{-k-7}$, together
with (\ref{eq:bd of exp of nu.x}) this gives (\ref{eq:ent of nu.x nontrivial}),
which completes the proof of the lemma.
\end{proof}

\subsection{\label{subsec:Proof-of-the ent in result}Proof of the entropy increase
result}
\begin{proof}[Proof of Theorem \ref{thm:ent inc with poly}]
Suppose that $\dim\mu<1$, and let $k,l,m,n\in\mathbb{Z}_{>0}$ and
$0<\epsilon,\epsilon_{0},\eta,\rho,\delta<1$ be with,
\begin{equation}
\left(1-\dim\mu\right)^{-1},k,\epsilon^{-1}\ll\epsilon_{0}^{-1}\ll\eta^{-1}\ll l\ll\rho^{-1}\ll\delta^{-1}\ll m\ll n.\label{eq:rel between params in ent inc result}
\end{equation}
Let $\nu\in\mathcal{M}(\mathcal{P}_{k})$ be such that $\Vert p\Vert_{2}\le\epsilon^{-1}$
and $\left|p'(x)\right|\ge\epsilon$ for $p\in\mathrm{supp}(\nu)$
and $x\in I$, and $\frac{1}{n}H\left(\nu,\mathcal{D}_{n}\right)\ge\epsilon$.
Let $\psi\in C^{1}(I,I)$ be with $\epsilon\le\left|\psi'(x)\right|\le\epsilon^{-1}$
for $x\in I$, and set $\theta:=\psi\mu$.

By Lemma \ref{lem:step1 in ent inc pf},
\[
\frac{1}{n}H\left(\nu.\theta,\mathcal{D}_{n}\right)+\delta\ge\mathbb{E}_{1\le i\le n}\left(\frac{1}{m}H\left(\nu_{p,i}.\theta_{x,i},\mathcal{D}_{i+m}\right)\right).
\]
Hence, by Lemma \ref{lem:linearization},
\[
\frac{1}{n}H\left(\nu.\theta,\mathcal{D}_{n}\right)+2\delta\ge\mathbb{E}_{1\le i\le n}\left(\frac{1}{m}H\left(\left(\nu_{p,i}.x\right)*\left(S_{p'(x)}\theta_{x,i}\right),\mathcal{D}_{i+m}\right)\right).
\]
Thus, since $\left|p'(x)\right|\ge\epsilon$ for $p\in\mathrm{supp}(\nu)$
and $x\in I$,
\begin{equation}
\frac{1}{n}H\left(\nu.\theta,\mathcal{D}_{n}\right)+3\delta\ge\mathbb{E}_{1\le i\le n}\left(\frac{1}{m}H\left(\left(S_{p'(x)^{-1}}\nu_{p,i}\right).x*\theta_{x,i},\mathcal{D}_{i+m}\right)\right).\label{eq:by linearization ect}
\end{equation}

Write $\Gamma:=\lambda_{n}\times\theta\times\nu$, where $\lambda_{n}$
is defined in Section \ref{subsec:Basic-notations}. Let $E_{1}$
be the set of all $(i,x,p)\in\mathcal{N}_{n}\times I\times\mathrm{supp}(\nu)$
such that $\frac{1}{m}H\left(\theta_{x,i},\mathcal{D}_{i+m}\right)\ge\dim\mu-\delta$.
By Proposition \ref{prop:bef uni ent dim prop}, we may assume that
$\Gamma(E_{1})>1-\delta$. Also, by \cite[Corollary 4.10]{Ho1},
\begin{equation}
\frac{1}{m}H\left(\left(S_{p'(x)^{-1}}\nu_{p,i}\right).x*\theta_{x,i},\mathcal{D}_{i+m}\right)\ge\dim\mu-2\delta\text{ for }(i,x,p)\in E_{1}.\label{eq:lb on E_1}
\end{equation}

Let $E_{2}$ be the set of all $(i,x,p)\in E_{1}$ such that
\begin{equation}
\mathbb{P}_{i\le j\le i+m}\left\{ \frac{1}{l}H\left(\left(\theta_{x,i}\right)_{y,j},\mathcal{D}_{j+l}\right)<\frac{1+\dim\mu}{2}\right\} >1-\eta,\label{eq:first prop of E_2}
\end{equation}
and
\begin{equation}
\frac{1}{m}H\left(\left(\nu_{p,i}\right).x,\mathcal{D}_{i+m}\right)>\epsilon_{0}.\label{eq:second prop of E_2}
\end{equation}
By Corollary \ref{cor:uni ent dim comp of comp} and Lemma \ref{lem:ent of nu imp ent of push of comp of nu},
we may assume that $\Gamma(E_{2})>\epsilon_{0}$.

Let $(i,x,p)\in E_{2}$ and set $\xi:=\left(S_{p'(x)^{-1}}\nu_{p,i}\right).x$.
We next want to apply Theorem \ref{thm:ent inc in R} in order to
obtain entropy increase for the convolution $\xi*\theta_{x,i}$. Let
$q_{1},q_{2}\in\mathrm{supp}(\nu_{p,i})$ be given. Since $x\in I$,
$\left|p'(x)\right|\ge\epsilon$, and $q_{1},q_{2}$ belong
to the same atom of $\mathcal{D}_{i}^{\mathcal{P}_{k}}$,
\[
\left|p'(x)\right|^{-1}\cdot\left|q_{1}(x)-q_{2}(x)\right|\le\epsilon^{-1}(k+1)2^{-i}.
\]
From this and since $\theta_{x,i}$ is supported on a single atom
of $\mathcal{D}_{i}^{1}$,
\begin{equation}
\mathrm{diam}\left(\mathrm{supp}\left(\xi\right)\right),\mathrm{diam}\left(\mathrm{supp}\left(\theta_{x,i}\right)\right)=O_{k,\epsilon}\left(2^{-i}\right).\label{eq:supports are big O}
\end{equation}

We have $\Vert p\Vert_{2}\le\epsilon^{-1}$, which implies $\left|p'(x)\right|=O_{k,\epsilon}(1)$.
Thus, from (\ref{eq:second prop of E_2}) and since $k,\epsilon^{-1},\epsilon_{0}^{-1}\ll m$,
\[
\frac{1}{m}H\left(\xi,\mathcal{D}_{i+m}\right)>\epsilon_{0}/2.
\]
From this, (\ref{eq:rel between params in ent inc result}), (\ref{eq:first prop of E_2}),
(\ref{eq:supports are big O}), and Theorem \ref{thm:ent inc in R},
\[
\frac{1}{m}H\left(\xi*\theta_{x,i},\mathcal{D}_{i+m}\right)\ge\frac{1}{m}H\left(\theta_{x,i},\mathcal{D}_{i+m}\right)+\rho.
\]
Hence, since $E_{2}\subset E_{1}$, we have thus proven that
\begin{equation}
\frac{1}{m}H\left(\left(S_{p'(x)^{-1}}\nu_{p,i}\right).x*\theta_{x,i},\mathcal{D}_{i+m}\right)\ge\dim\mu+\rho-\delta\text{ for }(i,x,p)\in E_{2}.\label{eq:lb on E_2}
\end{equation}

Now, from (\ref{eq:by linearization ect}), (\ref{eq:lb on E_1})
and (\ref{eq:lb on E_2}),
\[
\frac{1}{n}H\left(\nu.\theta,\mathcal{D}_{n}\right)+3\delta\ge\Gamma\left(E_{1}\setminus E_{2}\right)\left(\dim\mu-2\delta\right)+\Gamma\left(E_{2}\right)\left(\dim\mu+\rho-\delta\right).
\]
Thus, since $\Gamma(E_{1})>1-\delta$ and $\Gamma(E_{2})>\epsilon_{0}$,
\[
\frac{1}{n}H\left(\nu.\theta,\mathcal{D}_{n}\right)\ge\dim\mu+\epsilon_{0}\rho-O(\delta).
\]
By (\ref{eq:rel between params in ent inc result}), this completes
the proof of the theorem.
\end{proof}

\section{\label{sec:Proof-of-the-main-result}Proof of the main result}

We shall need the following lemma for the proof of Theorem \ref{thm:main thm}.
Recall from Section \ref{subsec:Symbolic-notations} that $\Pi:\Lambda^{\mathbb{N}}\rightarrow I$
denotes the coding map associated to $\Phi$, and that $\{\beta_{\omega}\}_{\omega\in\Lambda^{\mathbb{N}}}$
denotes the disintegration of $\beta:=p^{\mathbb{N}}$ with respect
to $\Pi^{-1}\mathcal{B}_{\mathbb{R}}$. Also, recall that for each
$n\ge1$, we defined $\Pi_{n}:\Lambda^{\mathbb{N}}\rightarrow\mathcal{A}(I)$
by $\Pi_{n}(\omega)=\varphi_{\omega|_{n}}$ for $\omega\in\Lambda^{\mathbb{N}}$.
Finally, recall from Section \ref{subsec:The-setup} that $\chi$
denotes the Lyapunov exponent associated to $\Phi$ and $p$.
\begin{lem}
\label{lem:0-ent lemma}For $\beta$-a.e. $\omega$,
\[
\underset{n\rightarrow\infty}{\lim}\frac{1}{n}H\left(\left(\Pi_{n}\beta_{\omega}\right).\mu,\mathcal{D}_{\chi n}\right)=0.
\]
\end{lem}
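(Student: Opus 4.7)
The plan is to show that, for $\beta$-typical $\omega$, the measure $(\Pi_n\beta_\omega).\mu$ is essentially supported in a ball of radius $\approx 2^{-\chi n}$ around $x_0 := \Pi(\omega)$, which forces its $\mathcal{D}_{\chi n}$-entropy to be sublinear in $n$. The key geometric observation is that whenever $\Pi(\omega')=x_0$, the defining property of the coding map gives $\varphi_{\omega'|_n}(\Pi(\sigma^n\omega'))=\Pi(\omega')=x_0$, so $x_0\in\varphi_{\omega'|_n}(I)$; combined with the bounded distortion property (Lemma \ref{lem:bd distort}(\ref{enu:bd prop})), the image $\varphi_{\omega'|_n}(I)$, and therefore the support of $\varphi_{\omega'|_n}\mu$, lies in $B\bigl(x_0,C|\varphi_{\omega'|_n}'(0)|\bigr)$.

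I would begin by transferring the a.s.\ statement (\ref{eq:chi as a.s.  limit}) from $\beta$ to $\beta_\omega$: by the defining property $\int\beta_\omega\,d\beta(\omega)=\beta$ of the disintegration, for $\beta$-a.e.\ $\omega$ one has $-\tfrac{1}{n}\log|\varphi_{\omega'|_n}'(0)|\to\chi$ for $\beta_\omega$-a.e.\ $\omega'$, and simultaneously $\Pi(\omega')=x_0$ for $\beta_\omega$-a.e.\ $\omega'$. For such a good $\omega$, apply Egorov's theorem on $(\Lambda^{\mathbb{N}},\beta_\omega)$: given $\epsilon,\delta>0$, find $A\subset\Lambda^{\mathbb{N}}$ with $\beta_\omega(A)\ge 1-\delta$ and $N_0\ge 1$ such that $|\varphi_{\omega'|_n}'(0)|\le 2^{-(\chi-\epsilon)n}$ for every $\omega'\in A$ and $n\ge N_0$. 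By the previous paragraph, for such $\omega'$ and $n$, the measure $\varphi_{\omega'|_n}\mu$ is supported in $B(x_0,C\cdot 2^{-(\chi-\epsilon)n})$, which meets $O(2^{\epsilon n})$ atoms of $\mathcal{D}_{\chi n}$, so it has $\mathcal{D}_{\chi n}$-entropy at most $O(\epsilon n+1)$.

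To finish, decompose $\beta_\omega=\beta_\omega(A)\,(\beta_\omega)_A+\beta_\omega(A^c)\,(\beta_\omega)_{A^c}$, push forward via $(\omega',x)\mapsto\varphi_{\omega'|_n}(x)$, and apply the concavity of entropy (the $A$-part contributes $O(\epsilon n+1)$ and the $A^c$-part at most the trivial bound $O(\chi n)$, coming from the support being in $I$) to obtain
\[
\tfrac{1}{n}H\bigl((\Pi_n\beta_\omega).\mu,\mathcal{D}_{\chi n}\bigr)\le O(\epsilon)+\delta\cdot O(\chi)+O(1/n)
\]
for $n\ge N_0$. Taking $\limsup_n$ and then letting $\epsilon,\delta\to 0$ yields the conclusion. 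I do not anticipate a significant obstacle: the core is a standard Egorov argument once the geometric containment $\varphi_{\omega'|_n}(I)\subset B(x_0,O(2^{-\chi n}))$ is in hand, and the only mild subtlety is the Fubini-type transfer of the Lyapunov limit from $\beta$ to the fibre measures $\beta_\omega$.
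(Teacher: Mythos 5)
Your proposal is correct and follows essentially the same route as the paper: transfer the Lyapunov limit to the fibre measures via disintegration, observe that $\mathrm{supp}(\varphi_{\omega'|_n}\mu)$ is contained in a ball of radius $O(|\varphi_{\omega'|_n}'(0)|)$ about $\Pi(\omega)$ when $\omega'$ lies in the fibre $\Pi^{-1}(\Pi\omega)$, split off a small exceptional set (your Egorov step plays the role of the paper's $n$-dependent set $\mathcal{U}$), and combine the two entropy bounds. One minor slip: the combination step uses the almost-convexity bound $H(\theta,\mathcal{D})\le\sum_i p_i H(\theta_i,\mathcal{D})+H(p)$ rather than concavity (which gives the opposite inequality), though your $O(1/n)$ correction term shows you had the right estimate in mind.
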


\begin{proof}
From (\ref{eq:chi as a.s.  limit}) and by basic basic properties
of disintegrations (see \cite[Section 5]{EiWa}), for $\beta$-a.e.
$\omega$ we have $\beta_{\omega}\left(\Pi^{-1}\left(\Pi\omega\right)\right)=1$
and
\begin{equation}
\underset{l\rightarrow\infty}{\lim}\frac{1}{l}\log\left|\varphi_{\eta|_{l}}'(0)\right|=-\chi\text{ for }\beta_{\omega}\text{-a.e. }\eta\in \Lambda^{\mathbb{N}}.\label{eq:beta_omega a.e.  eta}
\end{equation}
Fix $\omega\in\Lambda^{\mathbb{N}}$ for which these properties hold.

Let $C>1$ be a large global constant depending only on $\Phi$, let
$0<\delta<1$ and $n\in\mathbb{Z}_{>0}$ be with $C,\delta^{-1}\ll n$,
and let $\mathcal{U}$ be the set of words $u\in\Lambda^{n}$ such
that $\beta_{\omega}\left([u]\right)>0$ and $\left|\varphi_{u}'(0)\right|\le2^{n(\delta-\chi)}$.
By (\ref{eq:beta_omega a.e.  eta}) and $\delta^{-1}\ll n$, we may
assume that $\beta_{\omega}\left(\left[\mathcal{U}\right]\right)>1-\delta$,
where recall that we write $\left[\mathcal{U}\right]$ in place of
$\cup_{u\in\mathcal{U}}[u]$.

Let $u\in\mathcal{U}$ be given. From $\beta_{\omega}\left([u]\right)>0$
and $\beta_{\omega}\left(\Pi^{-1}\left(\Pi\omega\right)\right)=1$,
it follows that there exists $\eta\in[u]\cap\Pi^{-1}\left(\Pi\omega\right)$.
Thus, for each $x\in I$,
\[
\left|\varphi_{u}(x)-\Pi(\omega)\right|=\left|\varphi_{u}(x)-\varphi_{u}\left(\Pi(\sigma^{n}(\eta))\right)\right|\le C2^{n(\delta-\chi)},
\]
where the last inequality follows from the mean value theorem and
bounded distortion. This implies that $\mathrm{supp}\left(\varphi_{u}\mu\right)\subset B\left(\Pi(\omega),C2^{n(\delta-\chi)}\right)$
for all $u\in\mathcal{U}$, and so
\[
\mathrm{diam}\left(\mathrm{supp}\left(\left(\Pi_{n}\left(\beta_{\omega}\right)_{\left[\mathcal{U}\right]}\right).\mu\right)\right)\le C2^{1+n(\delta-\chi)}.
\]
Hence, since $C,\delta^{-1}\ll n$,
\begin{equation}
\frac{1}{n}H\left(\left(\Pi_{n}\left(\beta_{\omega}\right)_{\left[\mathcal{U}\right]}\right).\mu,\mathcal{D}_{\chi n}\right)\le2\delta.\label{eq:ub cond on =00005BU=00005D}
\end{equation}

Set $\mathcal{U}^{c}:=\Lambda^{n}\setminus\mathcal{U}$, and note
that $\beta_{\omega}\left(\left[\mathcal{U}^{c}\right]\right)<\delta$.
Thus, since $\left(\Pi_{n}\left(\beta_{\omega}\right)_{\left[\mathcal{U}^{c}\right]}\right).\mu$
is supported on $I$,
\[
\beta_{\omega}\left(\left[\mathcal{U}^{c}\right]\right)\frac{1}{n}H\left(\left(\Pi_{n}\left(\beta_{\omega}\right)_{\left[\mathcal{U}^{c}\right]}\right).\mu,\mathcal{D}_{\chi n}\right)<\chi\delta.
\]
From this, from (\ref{eq:ub cond on =00005BU=00005D}), by the convexity
bound for entropy (see \cite[Lemma 3.1]{Ho1}), and since $\delta^{-1}\ll n$,
\[
\frac{1}{n}H\left(\left(\Pi_{n}\beta_{\omega}\right).\mu,\mathcal{D}_{\chi n}\right)\le(3+\chi)\delta,
\]
which completes the proof of the lemma.
\end{proof}
We can now begin the proof of our main result.
\begin{proof}[Proof of Theorem \ref{thm:main thm}]
Suppose that the assumptions made in the theorem are all satisfied,
and assume by contradiction that $\dim\mu<\min\left\{ 1,H(p)/\chi\right\} $.

Recall that for $l\ge1$, we denote by $\mathcal{C}_{l}$ the partition
of $\Lambda^{\mathbb{N}}$ into level-$l$ cylinder sets. Set $\Delta':=H\left(\beta,\mathcal{C}_{1}\mid\Pi^{-1}\mathcal{B}_{\mathbb{R}}\right)$,
where the right-hand side denotes the conditional entropy of $\mathcal{C}_{1}$
given $\Pi^{-1}\mathcal{B}_{\mathbb{R}}$ with respect to $\beta$.
By \cite[Theorem 2.8]{FH-dimension}, we have $\dim\mu=\left(H(p)-\Delta'\right)/\chi$.
Thus, from $\dim\mu<H(p)/\chi$, it follows that $\Delta'>0$. Moreover,
by \cite[Proposition 4.10]{FH-dimension},
\begin{equation}
\underset{l\rightarrow\infty}{\lim}\frac{1}{l}H\left(\beta_{\omega},\mathcal{C}_{l}\right)=\Delta'\text{ for }\beta\text{-a.e. }\omega.\label{eq:ent of slices}
\end{equation}

Since $\Phi$ is exponentially separated, there exists $0<c<1$ as
in Definition \ref{def:exp sep}. Set $\Delta:=\min\left\{ \Delta',1\right\} /2$,
and let $C>1$ be a large global constant depending only on $\Phi$.
Let $0<\rho,\delta<1$ and $M,k,n\in\mathbb{Z}_{>0}$ be such that
\begin{equation}
\Delta^{-1},c^{-1},C\ll M\ll k\ll\rho^{-1}\ll\delta^{-1}\ll n.\label{eq:rel between params in main pf}
\end{equation}
Set $n':=\left\lfloor n\Delta/\left(2\log|\Lambda|\right)\right\rfloor $.
By exponential separation and the choice of $c$, we may clearly assume
that
\begin{equation}
\Vert\varphi_{u_{1}}-\varphi_{u_{2}}\Vert_{I}\ge c^{n+n'}\text{ for all distinct }u_{1},u_{2}\in\Lambda^{n+n'}.\label{eq:by exp sep}
\end{equation}
By (\ref{eq:ent dim =00003D exact dim}) and $\delta^{-1}\ll n$,
we may also assume that
\begin{equation}
\dim\mu+\delta\ge\frac{1}{Mn}H\left(\mu,\mathcal{D}_{Mn+\chi(n+n')}\mid\mathcal{D}_{\chi(n+n')}\right).\label{eq:lb by exact dim}
\end{equation}

By (\ref{eq:def rel of mu}) and since $\beta=\int\beta_{\omega}\:d\beta(\omega)$,
\[
\mu=\left(\Pi_{n+n'}\beta\right).\mu=\int\left(\Pi_{n+n'}\beta_{\omega}\right).\mu\:d\beta(\omega).
\]
Hence, from (\ref{eq:lb by exact dim}) and by the concavity of conditional
entropy,
\[
\dim\mu+\delta\ge\int\frac{1}{Mn}H\left(\left(\Pi_{n+n'}\beta_{\omega}\right).\mu,\mathcal{D}_{Mn+\chi(n+n')}\mid\mathcal{D}_{\chi(n+n')}\right)\:d\beta(\omega).
\]
Together with Lemma \ref{lem:0-ent lemma}, this gives
\[
\dim\mu+2\delta\ge\int\frac{1}{Mn}H\left(\left(\Pi_{n+n'}\beta_{\omega}\right).\mu,\mathcal{D}_{Mn+\chi(n+n')}\right)\:d\beta(\omega).
\]
Thus, setting
\[
g(\omega):=\frac{1}{Mn}H\left(\left(\Pi_{n+n'}\beta_{\omega}\right).\mu,\mathcal{D}_{Mn+\chi(n+n')}\right)\text{ for }\omega\in\Lambda^{\mathbb{N}},
\]
we have
\begin{equation}
\dim\mu+2\delta\ge\int g(\omega)\:d\beta(\omega).\label{eq:lb on dim(mu) by g}
\end{equation}

For $l\in\mathbb{Z}_{>0}$, let $\mathcal{U}_{l}$ be the set of words
$u\in\Lambda^{l}$ such that $2^{-l(\chi+\delta)}\le\left|\varphi_{u}'(0)\right|\le2^{-l(\chi-\delta)}$.
Let $E$ be the set of $\omega\in\Lambda^{\mathbb{N}}$ such that
$\mathrm{supp}(\beta_{\omega})\subset\Pi^{-1}\left(\Pi\omega\right)$,
$\frac{1}{n}H\left(\beta_{\omega},\mathcal{C}_{n}\right)>\Delta$,
$\beta_{\omega}\left(\left[\mathcal{U}_{n}\right]\right)>1-\delta$,
and $\sigma^{n}\beta_{\omega}\left(\left[\mathcal{U}_{n'}\right]\right)>1-\delta$.
By (\ref{eq:ent of slices}) and (\ref{eq:chi as a.s.  limit}), by
basic properties of disintegrations, and since $\Delta^{-1},\delta^{-1}\ll n$,
we may assume that $\beta(E)>1-\delta$. In what follows, fix $\omega\in E$.

By the definition of $\mathcal{U}_{n}$, there exists a partition
$\left\{ \mathcal{U}_{n,j}\right\} _{j\in J}$ of $\mathcal{U}_{n}$
such that $|J|\le3\delta n$ and,
\begin{equation}
\left|\varphi_{u_{1}}'(0)\right|\le2\left|\varphi_{u_{2}}'(0)\right|\text{ for all }j\in J\text{ and }u_{1},u_{2}\in\mathcal{U}_{n,j}.\label{eq:def prop of U_n,j}
\end{equation}
For $j\in J$ and $u\in\mathcal{U}_{n'}$, set $F_{j,u}:=\left[\mathcal{U}_{n,j}\right]\cap\sigma^{-n}\left[u\right]$
and
\[
g(\omega,j,u):=\frac{1}{Mn}H\left(\left(\Pi_{n+n'}\left(\beta_{\omega}\right)_{F_{j,u}}\right).\mu,\mathcal{D}_{Mn+\chi(n+n')}\right).
\]

Since $\omega\in E$, there exist $0\le\delta'<2\delta$ and $\theta\in\mathcal{M}\left(\Lambda^{\mathbb{N}}\right)$
such that
\[
\beta_{\omega}=\sum_{(j,u)\in J\times\mathcal{U}_{n'}}\beta_{\omega}\left(F_{j,u}\right)\left(\beta_{\omega}\right)_{F_{j,u}}+\delta'\theta.
\]
Thus, by concavity,
\begin{equation}
g(\omega)\ge\sum_{(j,u)\in J\times\mathcal{U}_{n'}}\beta_{\omega}\left(F_{j,u}\right)g(\omega,j,u).\label{eq:lb on g(omega) by g(omega,j,u)}
\end{equation}
Moreover, from $\frac{1}{n}H\left(\beta_{\omega},\mathcal{C}_{n}\right)>\Delta$,
by the convexity bound (see \cite[Lemma 3.1]{Ho1}), and since the
cardinality of $J\times\mathcal{U}_{n'}$ is at most $3\delta n|\Lambda|^{n'}$,
\begin{multline*}
\Delta<\sum_{(j,u)\in J\times\mathcal{U}_{n'}}\beta_{\omega}\left(F_{j,u}\right)\frac{1}{n}H\left(\left(\beta_{\omega}\right)_{F_{j,u}},\mathcal{C}_{n}\right)\\
+\delta'\frac{1}{n}H\left(\theta,\mathcal{C}_{n}\right)+\frac{\log\left(3\delta n\right)}{n}+\frac{n'}{n}\log|\Lambda|.
\end{multline*}
Note that,
\begin{equation}
\frac{1}{n}H\left(\theta',\mathcal{C}_{n}\right)\le\log|\Lambda|\text{ for all }\theta'\in\mathcal{M}\left(\Lambda^{\mathbb{N}}\right).\label{eq:ub ent all measures}
\end{equation}
Hence, from the previous formula and the definition of $n'$,
\begin{equation}
\Delta/3<\sum_{(j,u)\in J\times\mathcal{U}_{n'}}\beta_{\omega}\left(F_{j,u}\right)\frac{1}{n}H\left(\left(\beta_{\omega}\right)_{F_{j,u}},\mathcal{C}_{n}\right).\label{eq:Delta/3<}
\end{equation}

Let $\mathcal{Q}$ be the set of all $(j,u)\in J\times\mathcal{U}_{n'}$
such that $\frac{1}{n}H\left(\left(\beta_{\omega}\right)_{F_{j,u}},\mathcal{C}_{n}\right)\ge\Delta/6$.
From (\ref{eq:ub ent all measures}) and (\ref{eq:Delta/3<}),
\begin{equation}
\frac{\Delta}{6\log|\Lambda|}<\sum_{(j,u)\in\mathcal{Q}}\beta_{\omega}\left(F_{j,u}\right).\label{eq:lb on mass of (j,u ) in Q}
\end{equation}

Recall that $C>1$ is a large global constant depending only on $\Phi$.
For $v\in\mathcal{U}_{n}$ and $w\in\mathcal{U}_{n'}$, it follows
by bounded distortion, the chain rule, and the definition of the sets
$\mathcal{U}_{l}$, that 
\[
C^{-1}2^{-\delta(n+n')}\le2^{\chi(n+n')}\left|\varphi_{vw}'(x)\right|\le C2^{\delta(n+n')}\text{ for }x\in I.
\]
In particular, $S_{2^{\chi(n+n')}}\circ\varphi_{vw}$ is bi-Lipschitz
with bi-Lipschitz constant at most $C2^{\delta(n+n')}$. Moreover,
given $(j,u)\in J\times\mathcal{U}_{n'}$ and setting $\xi:=\left(\Pi_{n+n'}\left(\beta_{\omega}\right)_{F_{j,u}}\right)$,
we have
\[
\mathrm{supp}(\xi)\subset\left\{ \varphi_{vw}\::\:v\in\mathcal{U}_{n}\text{ and }w\in\mathcal{U}_{n'}\right\} .
\]
From these facts, by concavity, from Lemma \ref{lem:dyad ent =000026 lip func},
by (\ref{eq:ent dim =00003D exact dim}), and since $C,\delta^{-1}\ll n$,
\[
g(\omega,j,u)\ge\int\frac{1}{Mn}H\left(S_{2^{\chi(n+n')}}\varphi\mu,\mathcal{D}_{Mn}\right)\:d\xi(\varphi)-O(1/n)\ge\dim\mu-O(\delta).
\]
We have thus shown that,
\begin{equation}
g(\omega,j,u)\ge\dim\mu-O(\delta)\text{ for }(j,u)\in J\times\mathcal{U}_{n'}.\label{eq:simple lb by conc}
\end{equation}

Fix $(j,u)\in\mathcal{Q}$, and set $a:=\varphi_{u}(0)$. Recall from
Section \ref{subsec:Function-spaces} that the linear operator $P_{k,a}:\mathcal{A}(I)\rightarrow\mathcal{P}_{k}$
is defined by sending $f\in\mathcal{A}(I)$ to its $k$-th order Taylor
polynomial at the point $a$. Let $v\in\mathcal{U}_{n,j}$ and $x\in I$
be given. By Taylor's theorem with Lagrange remainder term,
\[
\left|\left(\varphi_{v}-P_{k,a}\varphi_{v}\right)\left(\varphi_{u}(x)\right)\right|\le\frac{1}{(k+1)!}\left|\varphi_{u}(x)-a\right|^{k+1}\sup_{y\in I}\left|\varphi_{v}^{(k+1)}(y)\right|.
\]
Hence, from Lemma \ref{lem:bd distort}, by the mean value theorem,
and since $v\in\mathcal{U}_{n}$ and $u\in\mathcal{U}_{n'}$,
\begin{eqnarray*}
\left|\left(\varphi_{v}-P_{k,a}\varphi_{v}\right)\left(\varphi_{u}(x)\right)\right| & \le & C^{k+1}\left|\varphi_{u}'(0)\right|^{k+1}C^{k+1}\left|\varphi_{v}'(0)\right|\\
 & \le & C^{2k+2}2^{(k+1)n'(\delta-\chi)}2^{n(\delta-\chi)}.
\end{eqnarray*}
From this, since $\Delta^{-1},C,M\ll k\ll n$, by the definition of
$n'$, and since we can assume that $\delta<\chi/2$,
\begin{equation}
\left|\left(\varphi_{v}-P_{k,a}\varphi_{v}\right)\left(\varphi_{u}(x)\right)\right|\le2^{-Mn-\chi(n+n')}\text{ for }v\in\mathcal{U}_{n,j}\text{ and }x\in I.\label{eq:bd on remainder}
\end{equation}

Note that
\[
\left(\Pi_{n+n'}\left(\beta_{\omega}\right)_{F_{j,u}}\right).\mu=\left(\Pi_{n}\left(\beta_{\omega}\right)_{F_{j,u}}\right).\varphi_{u}\mu,
\]
and also
\begin{equation}
\mathrm{supp}\left(\Pi_{n}\left(\beta_{\omega}\right)_{F_{j,u}}\right)=\left\{ \varphi_{v}\::\:v\in\mathcal{U}_{n,j}\text{ and }\beta_{\omega}\left(\left[vu\right]\right)>0\right\} .\label{eq:form of supp of push by Pi_n}
\end{equation}
Together with (\ref{eq:bd on remainder}) and Lemma \ref{lem:cont of ent wrt close maps},
this implies that
\begin{equation}
g(\omega,j,u)\ge\frac{1}{Mn}H\left(\left(P_{k,a}\Pi_{n}\left(\beta_{\omega}\right)_{F_{j,u}}\right).\varphi_{u}\mu,\mathcal{D}_{Mn+\chi(n+n')}\right)-\delta.\label{eq:lb on g by taylor}
\end{equation}

Let $\psi:\mathbb{R}\rightarrow\mathbb{R}$ be the affine map such
that $\psi(a)=0$ (recall that $a:=\varphi_{u}(0)$) and $\psi\left(\varphi_{u}(1)\right)=1$.
In particular, $\psi\circ\varphi_{u}(I)=I$. Let $r_{\psi},t_{\psi}\in\mathbb{R}$
be with $\psi(x)=r_{\psi}x+t_{\psi}$ for $x\in\mathbb{R}$. By the
mean value theorem, by bounded distortion, and since $u\in\mathcal{U}_{n'}$,
\begin{equation}
C^{-1}2^{n'(\chi-\delta)}\le C^{-1}\left|\varphi_{u}'(0)\right|^{-1}\le\left|r_{\psi}\right|\le C\left|\varphi_{u}'(0)\right|^{-1}\le C2^{n'(\chi+\delta)}.\label{eq:bounds on r_psi}
\end{equation}
Let $R_{\psi^{-1}}:\mathcal{P}_{k}\rightarrow\mathcal{P}_{k}$ be
with $R_{\psi^{-1}}(p):=p\circ\psi^{-1}$ for $p\in\mathcal{P}_{k}$.

By (\ref{eq:def prop of U_n,j}) and $\mathcal{U}_{n,j}\subset\mathcal{U}_{n}$,
there exists $2^{n(\chi-\delta)}\le\alpha\le2^{n(\chi+\delta)}$ such
that
\begin{equation}
\frac{1}{2}\le\alpha\left|\varphi_{v}'(0)\right|\le2\text{ for }v\in\mathcal{U}_{n,j}.\label{eq:bounds on alpha}
\end{equation}
Set
\[
\nu:=S_{\alpha|r_{\psi}|}T_{-\Pi(\omega)}R_{\psi^{-1}}P_{k,a}\Pi_{n}\left(\beta_{\omega}\right)_{F_{j,u}},
\]
where recall from Section \ref{subsec:Basic-notations} that for $p(X)\in\mathcal{P}_{k}$
\[
T_{-\Pi(\omega)}\left(p(X)\right)=p(X)-\Pi(\omega)\:\text{ and }\:S_{\alpha|r_{\psi}|}\left(p(X)\right)=\alpha|r_{\psi}|\cdot p(X).
\]
Note that,
\[
\nu.\left(\psi\varphi_{u}\mu\right)=S_{\alpha|r_{\psi}|}T_{-\Pi(\omega)}\left(\left(P_{k,a}\Pi_{n}\left(\beta_{\omega}\right)_{F_{j,u}}\right).\varphi_{u}\mu\right).
\]
Together with (\ref{eq:lb on g by taylor}), $|r_{\psi}|\le C2^{n'(\chi+\delta)}$
and $\alpha\le2^{n(\chi+\delta)}$, this gives
\begin{equation}
g(\omega,j,u)\ge\frac{1}{Mn}H\left(\nu.\left(\psi\varphi_{u}\mu\right),\mathcal{D}_{Mn}\right)-O(\delta).\label{eq:lb on g(omega,j,u) bef ent enc}
\end{equation}

Next, we aim to apply Theorem \ref{thm:ent inc with poly} to the
entropy appearing on the right-hand side of the last inequality. We
proceed to verify the conditions of the theorem. Let $v\in\mathcal{U}_{n,j}$
be with $\beta_{\omega}\left(\left[vu\right]\right)>0$, and set $p:=S_{\alpha|r_{\psi}|}T_{-\Pi(\omega)}R_{\psi^{-1}}P_{k,a}\varphi_{v}$.
Note that by (\ref{eq:form of supp of push by Pi_n}), each polynomial
in $\mathrm{supp}(\nu)$ is of this form. We have
\[
\psi^{-1}(X)-a=r_{\psi}^{-1}X-t_{\psi}r_{\psi}^{-1}-a=r_{\psi}^{-1}\left(X-\psi(a)\right)=r_{\psi}^{-1}X,
\]
which, together with (\ref{eq:exp form of P_k,a}), gives
\begin{eqnarray}
p(X)+\alpha|r_{\psi}|\Pi(\omega) & = & \alpha|r_{\psi}|\sum_{l=0}^{k}\frac{\varphi_{v}^{(l)}\left(a\right)}{l!}\left(\psi^{-1}(X)-a\right)^{l}\nonumber \\
 & = & \alpha|r_{\psi}|\sum_{l=0}^{k}\frac{\varphi_{v}^{(l)}\left(a\right)}{l!}r_{\psi}^{-l}X^{l}.\label{eq:dev of p}
\end{eqnarray}
Thus,
\begin{equation}
\Vert p(X)\Vert_{2}^{2}=\alpha^{2}r_{\psi}^{2}\left(\varphi_{v}\left(a\right)-\Pi(\omega)\right)^{2}+\sum_{l=1}^{k}\left(\alpha r_{\psi}\frac{\varphi_{v}^{(l)}\left(a\right)}{l!}r_{\psi}^{-l}\right)^{2}.\label{eq:norm of p equals}
\end{equation}

By Lemma \ref{lem:bd distort}, since $v\in\mathcal{U}_{n,j}$, and
from (\ref{eq:bounds on alpha}),
\begin{equation}
\left|\varphi_{v}^{(l)}\left(a\right)\right|\le l!C^{l}\left|\varphi_{v}'(0)\right|\le l!C^{l}2\alpha^{-1}\text{ for }1\le l\le k.\label{eq:ub on derivatives}
\end{equation}
Since $\omega\in E$, we have $\mathrm{supp}(\beta_{\omega})\subset\Pi^{-1}\left(\Pi\omega\right)$.
From this and $\beta_{\omega}\left(\left[vu\right]\right)>0$, it
follows that there exists $\omega'\in\left[vu\right]$ such that
\[
\Pi(\omega)=\Pi(\omega')=\varphi_{vu}\Pi(\sigma^{n+n'}(\omega')).
\]
Additionally, from $a=\varphi_{u}(0)$, we get $\varphi_{v}\left(a\right)=\varphi_{vu}\left(0\right)$.
Hence, by the mean value theorem, by bounded distortion, since $\Pi(\sigma^{n+n'}(\omega'))\in I$,
and from (\ref{eq:bounds on r_psi}) and (\ref{eq:bounds on alpha}),
\begin{multline*}
\left|\varphi_{v}\left(a\right)-\Pi(\omega)\right|=\left|\varphi_{vu}\left(0\right)-\varphi_{vu}\Pi(\sigma^{n+n'}(\omega'))\right|\\
\le C\left|\varphi_{v}'(0)\right|\left|\varphi_{u}'(0)\right|\le2C^{2}\alpha^{-1}|r_{\psi}|^{-1}.
\end{multline*}
From this, (\ref{eq:norm of p equals}) and (\ref{eq:ub on derivatives}),
\[
\Vert p(X)\Vert_{2}^{2}\le4C^{4}+\sum_{l=1}^{k}4C^{2l}|r_{\psi}|^{2-2l}\le8kC^{2k}.
\]

Let $x\in I$ be given. From (\ref{eq:dev of p}),
\[
p'(x)=\alpha|r_{\psi}|\sum_{l=1}^{k}\frac{\varphi_{v}^{(l)}\left(a\right)}{(l-1)!}r_{\psi}^{-l}x^{l-1}.
\]
From (\ref{eq:bounds on r_psi}) and (\ref{eq:ub on derivatives}),
it follows that for $2\le l\le k$
\[
\left|\alpha\frac{\varphi_{v}^{(l)}\left(a\right)}{(l-1)!}r_{\psi}^{1-l}x^{l-1}\right|\le2kC^{2k-1}\cdot2^{-n'(\chi-\delta)}.
\]
Moreover, by (\ref{eq:bounds on alpha}) and bounded distortion, we
get $\left|\alpha\varphi_{v}'\left(a\right)\right|\ge\frac{1}{2}C^{-1}$.
Since $C,k\ll n'$ and we can assume that $\delta<\chi/2$, all of
this implies that $\left|p'(x)\right|\ge\frac{1}{4}C^{-1}$. We have
thus shown that,
\begin{equation}
\Vert p\Vert_{2}\le\left(8kC^{2k}\right)^{1/2}\text{ and }\left|p'(x)\right|\ge\frac{1}{4}C^{-1}\text{ for }p\in\mathrm{supp}(\nu)\text{ and }x\in I.\label{eq:ub on norm =000026 lb on der}
\end{equation}

Let $v_{1},v_{2}\in\mathcal{U}_{n,j}$ be distinct, and for $i=1,2$
set $p_{i}:=S_{\alpha|r_{\psi}|}T_{-\Pi(\omega)}R_{\psi^{-1}}P_{k,a}\varphi_{v_{i}}$.
Assume by contradiction that $\mathcal{D}_{Mn}^{\mathcal{P}_{k}}(p_{1})=\mathcal{D}_{Mn}^{\mathcal{P}_{k}}(p_{2})$.
By the definition of $\mathcal{D}_{Mn}^{\mathcal{P}_{k}}$ (see Section
\ref{subsec:Dyadic-partitions}), this implies that
\begin{equation}
\left|p_{1}(x)-p_{2}(x)\right|\le(k+1)2^{-Mn}\text{ for }x\in I.\label{eq:ub on dist of poly}
\end{equation}

On the other hand, by (\ref{eq:by exp sep}), we have $\Vert\varphi_{v_{1}u}-\varphi_{v_{2}u}\Vert_{I}\ge c^{n+n'}$.
Thus, since $\psi^{-1}(I)=\varphi_{u}(I)$, there exists $x_{0}\in I$
such that
\[
\left|\varphi_{v_{1}}\left(\psi^{-1}(x_{0})\right)-\varphi_{v_{2}}\left(\psi^{-1}(x_{0})\right)\right|\ge c^{n+n'}.
\]
Additionally, from $\psi^{-1}(I)=\varphi_{u}(I)$ and (\ref{eq:bd on remainder}),
\[
\left|\left(\varphi_{v_{i}}-P_{k,a}\varphi_{v_{i}}\right)\left(\psi^{-1}(x_{0})\right)\right|\le2^{-Mn-\chi(n+n')}\text{ for }i=1,2.
\]
From the last two inequalities and since $c^{-1}\ll M$,
\[
\left|P_{k,a}\varphi_{v_{1}}\left(\psi^{-1}(x_{0})\right)-P_{k,a}\varphi_{v_{2}}\left(\psi^{-1}(x_{0})\right)\right|\ge c^{n+n'}/2,
\]
which gives $\left|p_{1}(x_{0})-p_{2}(x_{0})\right|\ge c^{n+n'}/2$.
Since $c^{-1}\ll M$ and $k\ll n$, this contradicts (\ref{eq:ub on dist of poly}).
Hence we must have $\mathcal{D}_{Mn}^{\mathcal{P}_{k}}(p_{1})\ne\mathcal{D}_{Mn}^{\mathcal{P}_{k}}(p_{2})$,
which implies
\begin{equation}
\frac{1}{Mn}H\left(\nu,\mathcal{D}_{Mn}^{\mathcal{P}_{k}}\right)=\frac{1}{Mn}H\left(\left(\beta_{\omega}\right)_{F_{j,u}},\mathcal{C}_{n}\right)\ge\frac{\Delta}{6M},\label{eq:lb on ent of nu}
\end{equation}
where the last inequality follows from $(j,u)\in\mathcal{Q}$.

For $x\in I$ we have $(\psi\circ\varphi_{u})'(x)=r_{\psi}\varphi_{u}'(x)$.
Thus, by (\ref{eq:bounds on r_psi}) and bounded distortion,
\[
C^{-2}\le\left|(\psi\circ\varphi_{u})'(x)\right|\le C^{2}.
\]
From this, from (\ref{eq:ub on norm =000026 lb on der}) and (\ref{eq:lb on ent of nu}),
since $\dim\mu<1$, by the relations (\ref{eq:rel between params in main pf}),
and by Theorem \ref{thm:ent inc with poly},
\[
\frac{1}{Mn}H\left(\nu.\left(\psi\varphi_{u}\mu\right),\mathcal{D}_{Mn}\right)\ge\dim\mu+\rho.
\]
Hence, by (\ref{eq:lb on g(omega,j,u) bef ent enc}),
\begin{equation}
g(\omega,j,u)\ge\dim\mu+\rho-O(\delta)\text{ for all }(j,u)\in\mathcal{Q}.\label{eq:lb for (j,u) in Q}
\end{equation}

We can now complete the proof. From (\ref{eq:lb on g(omega) by g(omega,j,u)}),
(\ref{eq:simple lb by conc}), and (\ref{eq:lb for (j,u) in Q}),
\begin{eqnarray*}
g(\omega) & \ge & \sum_{(j,u)\in\mathcal{Q}}\beta_{\omega}\left(F_{j,u}\right)\left(\dim\mu+\rho-O(\delta)\right)\\
 & + & \sum_{(j,u)\in(J\times\mathcal{U}_{n'})\setminus\mathcal{Q}}\beta_{\omega}\left(F_{j,u}\right)\left(\dim\mu-O(\delta)\right).
\end{eqnarray*}
Thus, by (\ref{eq:lb on mass of (j,u ) in Q}) and since $\cup_{(j,u)\in J\times\mathcal{U}_{n'}}F_{j,u}=\left[\mathcal{U}_{n}\right]\cap\sigma^{-n}\left[\mathcal{U}_{n'}\right]$,
\[
g(\omega)\ge\frac{\rho\Delta}{6\log|\Lambda|}+\beta_{\omega}\left(\left[\mathcal{U}_{n}\right]\cap\sigma^{-n}\left[\mathcal{U}_{n'}\right]\right)\left(\dim\mu-O(\delta)\right),
\]
which holds for all $\omega\in E$. From this and by the definition
of $E$,
\[
g(\omega)\ge\frac{\rho\Delta}{6\log|\Lambda|}+\dim\mu-O(\delta)\text{ for all }\omega\in E.
\]
Hence, from (\ref{eq:lb on dim(mu) by g}) and since $\beta(E)>1-\delta$,
\[
\dim\mu+2\delta\ge(1-\delta)\left(\frac{\rho\Delta}{6\log|\Lambda|}+\dim\mu-O(\delta)\right).
\]
But since $\Delta^{-1},\rho^{-1}\ll\delta^{-1}$, this yields the
desired contradiction, completing the proof of the theorem.
\end{proof}
\bibliographystyle{plain}
\bibliography{../../bibfile}


$\newline$\textsc{Department of Mathematics, Technion, Haifa, Israel}$\newline$$\newline$\textit{E-mail: }
\texttt{arapaport@technion.ac.il}
\end{document}